\theoremstyle{plain}
\newtheorem{theorem}{Theorem}[section]
\newtheorem{lemma}[theorem]{Lemma}
\newtheorem{conj}[theorem]{Conjecture}
\newtheorem{prop}[theorem]{Proposition}
\newtheorem{cor}[theorem]{Corollary}
\theoremstyle{definition}
\newtheorem{definition}[theorem]{Definition}
\theoremstyle{remark}
\numberwithin{equation}{section}
\newcommand{\abs}[1]{\lvert#1\rvert}
\begin{document}

\title[Partial sums and convergents, II]{Which partial sums of the Taylor series for $\boldsymbol{e}$ are convergents to $\boldsymbol{e}$? (and a link to the primes 2, 5, 13, 37, 463), II } 

\author{Jonathan Sondow}
\address{ 
209 West 97th Street Apt 6F\\
New York, NY 10025\\
USA}
\email{jsondow@alumni.princeton.edu}

\author{Kyle Schalm}
\address{
P.O. Box 22017\\
Penticton, BC V2A 8L1\\
Canada}
\email{kyle.schalm@gmail.com}


\keywords{Simple continued fraction, convergents, Taylor series, $e$, 
measure of irrationality, Stirling's formula, recurrence, periodic, $p$-adic, 
primes.}

\subjclass[2000]{11A41, 11B37, 11B50, 11B83, 11J70, 11J82, 11Y55, 11Y60.}

\begin{abstract}
 This is an expanded version of our earlier paper. Let the $n$th partial 
 sum of the Taylor series $e = \sum_{r=0}^{\infty} 1/r!$ be $A_n/n!$, 
 and let $p_k/q_k$ be the $k$th convergent of the simple continued 
 fraction for $e$. Using a recent measure of irrationality for $e$, we 
 prove weak versions of our conjecture that only two of the partial sums 
 are convergents to $e$. A related result about the denominators $q_k$ 
 and powers of factorials is proved. We also show a surprising 
 connection between the $A_n$ and the primes $2$, $5$, $13$, $37$, 
 $463$. In the Appendix, we give a conditional proof of the conjecture, 
 assuming a second conjecture we make about the zeros of $A_n$ and $q_k$ 
 modulo powers of $2$. Tables supporting this Zeros Conjecture are 
 presented and we discuss a $2$-adic reformulation of it.
\end{abstract}

\maketitle


\section{Introduction}
 This is an expanded version of our earlier paper \cite{wpsa1}. There is 
 new material in Sections~4 and~5, and there are clarifications in several
 parts of the Appendix. Editorial problems with \cite{wpsa1} 
 were the source of many typos appearing in it. The typos are corrected 
 here.  

Based on calculations, the following conjecture was made in \cite{sondow1}.

\begin{conj} \label{conj1} \label{ps} 
Only two partial sums $A_n/n!$ of the Taylor series
\begin{equation}
\label{e}
	e = \sum_{r=0}^{\infty}\frac{1}{r!}		
\end{equation}
are convergents $p_k/q_k$ to the simple continued fraction expansion of $e$. 
\end{conj}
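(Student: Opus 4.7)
The two partial sums already known to be convergents are $A_1/1! = 2$ and $A_3/3! = 8/3$, corresponding to $p_0/q_0$ and $p_2/q_2$ in the expansion $e = [2; 1, 2, 1, 1, 4, 1, 1, 6, \ldots]$. My plan is to show, \emph{unconditionally but in weak form}, that no other $A_n/n!$ is a convergent for all but finitely many $n$, and to check the remaining $n$ directly.

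The first step is a sandwich for the partial-sum error,
\begin{equation*}
\frac{1}{(n+1)!} \;<\; e - \frac{A_n}{n!} \;<\; \frac{1}{n \cdot n!} \qquad (n \geq 1),
\end{equation*}
paired with the standard convergent bound $|e - p_k/q_k| < 1/q_k^2$. Writing $d_n = \gcd(A_n, n!)$, so that $A_n/n!$ in lowest terms has denominator $q = n!/d_n$, the assumption that this is a convergent forces
\begin{equation*}
\frac{1}{(n+1)!} \;<\; \frac{d_n^2}{(n!)^2}, \qquad \text{i.e.,} \qquad d_n \;>\; \sqrt{\frac{n!}{\,n+1\,}}.
\end{equation*}
Thus a very large portion of $n!$ must divide $A_n$, and the problem is shifted to the $p$-adic valuations $v_p(A_n)$.

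The second step sharpens this lower bound on $d_n$ by invoking an effective irrationality measure for $e$ of the form $|e - p/q| > \varphi(q)/q^2$ with $\varphi$ decreasing only slowly (the ``recent measure'' alluded to in the abstract), which raises the lower bound on $d_n$ by a factor tending to infinity with $n$. The third step produces an opposing upper bound on $d_n$ by $p$-adic analysis: the recursion $A_n = nA_{n-1} + 1$ gives tractable control of $v_p(A_n)$ for each fixed prime $p$, so that
\begin{equation*}
d_n \;=\; \prod_p p^{\min(v_p(A_n),\, v_p(n!))}
\end{equation*}
can be estimated from above, and Stirling's formula converts this into a bound on $d_n$ that, combined with the lower bound from the previous step, yields a contradiction for all sufficiently large $n$. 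The finitely many remaining $n$ are then dispatched by direct computation against the known continued fraction of $e$.

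The main obstacle is precisely this $p$-adic control, and in particular the $2$-adic one. The primes $2, 5, 13, 37, 463$ singled out in the abstract are exactly the exceptional primes for which $v_p(A_n)$ can be anomalously large, so that $d_n$ acquires a large $p$-part; controlling their combined contribution uniformly in $n$ is delicate. Since the Appendix itself proves the full conjecture only conditionally on a Zeros Conjecture describing $A_n$ and $q_k$ modulo powers of $2$, it appears that the complete resolution of Conjecture~\ref{conj1} cannot be achieved without a precise understanding of $2$-adic behaviour, and the scheme above can realistically deliver only an unconditional \emph{weak} form of the conjecture.
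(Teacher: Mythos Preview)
First, a framing point: this is stated as a \emph{conjecture}, and the paper does not prove it unconditionally either. What the paper actually establishes is (a) the weak result that among any three consecutive partial sums at most two are convergents (and more generally, among any $k$ consecutive partial sums at most two are convergents once $n$ is large), and (b) a conditional proof assuming a separate Zeros Conjecture. So your proposal should be compared to those, not to a nonexistent full proof.

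Your Step~1 is exactly the paper's starting point: if $A_n/n!$ is a convergent then $d_n^2 > n!/(n+1)$ (the paper's Lemma~2.1(ii)). Your Step~2, however, is redundant. The ``recent measure of irrationality'' in the paper is precisely $|e-p/q| > 1/(S(q)+1)!$, and when $q \mid n!$ this collapses to $|e-p/q| > 1/(n+1)!$ --- the very partial-sum lower bound you already used in Step~1. It does not raise the lower bound on $d_n$ beyond $\sqrt{n!/(n+1)}$.

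The substantive divergence is in Step~3. You aim for an upper bound on a \emph{single} $d_n$ via $p$-adic control of $v_p(A_n)$, and correctly diagnose that this is where the argument stalls. The paper sidesteps this entirely for its weak results: rather than bounding any individual $d_n$, it bounds \emph{products} of consecutive ones. From $A_{n+1}=(n+1)A_n+1$ and $A_{n+2}=(n+2)(n+1)A_n+(n+3)$ one gets $\gcd(d_n,d_{n+1})=1$ and $\gcd(d_n,d_{n+2})\mid(n+3)$, hence $d_n d_{n+1} d_{n+2} \le (n+3)!$. Combining with $d_{n+j}^2 > n!/(n+1)$ for three values of $j$ gives $(n!/(n+1))^3 < ((n+3)!)^2$, which forces $n\le 13$. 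A polynomial-recursion generalization handles $k$ consecutive sums. This product trick is the idea you are missing; it delivers unconditional weak results without any individual $d_n$ upper bound.

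For the conditional full proof, the paper's Appendix does use $2$-adic analysis, but not only of $A_n$: it bounds $[A(m)]_2$ \emph{and} $[Q(n)]_2$ (the convergent denominators) via the Zeros Conjecture, and plays them jointly against the rapidly growing $[m!]_2$ in the equation $A(m)Q(n)=m!\,P(n)$.

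Finally, your reading of the primes $2,5,13,37,463$ is off. They are not ``exceptional primes for which $v_p(A_n)$ is anomalously large'' in the sense relevant to $d_n$. In the paper they are the primes $p$ with $p\mid A_{p-1}$, equivalently the nontrivial values of $\gcd(N_n,N_{n+2})$ where $N_n=A_n/d_n$; this is a separate arithmetic curiosity (Section~5) and plays no role in the proofs toward the conjecture.
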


In the present paper, we prove some partial results toward Conjecture \ref{conj1}. One 
is that \textit{almost all the partial sums are \emph{not} convergents to $e$} 
(Corollary \ref{cor1}). The proofs do not use the known simple continued fraction expansion of $e$. Instead, the first author's \cite{sondow1} measure of irrationality for $e$ is employed --- see Lemma~\ref{lem1} part~(i).

In the Appendix, we use the continued fraction for $e$ to give a conditional proof of Conjecture \ref{conj1}, assuming 
a certain other conjecture we make about periodic behaviours of the $A_n$ and $q_k$ modulo powers of $2$ (the Zeros Conjecture). Experimental evidence for the latter is presented in the tables.

In Section 2, we prove two inequalities needed in the proofs of the main results, which are given in Section 3. The next section contains a result about denominators $q_k$ that are powers of factorials. In Section 5, we prove a surprising connection between the $A_n$ and the primes $2, 5, 13, 37, 463$.

\section{Two Lemmas} 

We establish two lemmas needed later.

\begin{lemma} \label{lem1}
Let $p/q$ be a convergent to the simple continued fraction for $e$.

\emph{(i)} If $q>1$ and $S(q)$ is the smallest positive integer such that $S(q)!$ is divisible by $q$, then 
\begin{equation}
\label{S}
      q^2 < \left(S(q)+1\right)!.
\end{equation}

\emph{(ii)} If $n! = dq$ is a multiple of $q$ with $n>0$, then 
\begin{equation}
\label{d}
	d^2 > \frac{n!}{n+1}.
\end{equation}
\end{lemma}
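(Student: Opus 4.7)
The plan is to derive both parts of the lemma by sandwiching $|e - p/q|$ between a continued-fraction upper bound and the irrationality measure of \cite{sondow1} advertised in the Introduction. Nothing else is really needed.

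For part (i), I would first invoke the standard convergent estimate
$$\left| e - \frac{p}{q} \right| < \frac{1}{q_k q_{k+1}},$$
where $p/q = p_k/q_k$. The continued fraction recurrence $q_{k+1} = a_{k+1}q_k + q_{k-1} \geq q_k + 1$ shows that the denominators are strictly increasing once they exceed $1$, so under the hypothesis $q > 1$ we have $q_{k+1} > q$ and hence $\left| e - p/q \right| < 1/q^2$. On the other side, Sondow's measure of irrationality for $e$ from \cite{sondow1} gives, for every integer $q > 1$,
$$\left| e - \frac{p}{q} \right| > \frac{1}{(S(q)+1)!}.$$
Chaining the two strict inequalities immediately yields $q^2 < (S(q)+1)!$.

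Part (ii) I would then deduce directly from part (i). The hypothesis $n! = dq$ means $q \mid n!$, so by minimality $S(q) \leq n$, and therefore $(S(q)+1)! \leq (n+1)!$. When $q > 1$, part (i) gives $q^2 < (n+1)!\, = (n+1)\, n!$, and since $d = n!/q$ this rearranges to $d^2 = (n!)^2/q^2 > n!/(n+1)$. The residual case $q=1$ is trivial: then $d = n!$ and $(n!)^2 > n!/(n+1)$ holds for every $n \geq 1$.

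The main obstacle is essentially bookkeeping: verifying that the convergent bound $|e - p/q| < 1/q^2$ really is strict under the hypothesis $q > 1$ (which is where one must quote the continued-fraction recurrence for $e$), and pinning down the exact form of Sondow's irrationality measure from \cite{sondow1} so that the two constants $q^2$ and $(S(q)+1)!$ line up. Once those two ingredients are in place, both inequalities in the lemma are one-line algebraic consequences, and part (ii) uses nothing beyond part (i) together with the definition of $S(q)$.
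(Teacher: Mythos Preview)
Your proposal is correct and follows essentially the same route as the paper: sandwich $|e-p/q|$ between Sondow's lower bound $1/(S(q)+1)!$ and the convergent upper bound $1/q^2$, then derive (ii) from (i) via $S(q)\le n$ together with the trivial case $q=1$. The only quibble is that the strict bound $|e-p/q|<1/q^2$ is a general property of convergents (the paper calls it ``the quadratic approximation property'') and does not require the specific continued fraction of $e$; the paper explicitly notes that the proofs in this section avoid Euler's expansion.
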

\begin{proof}
(i) Since $q>1$, the irrationality measure for $e$ in \cite[Theorem~1]{sondow1}, and the quadratic approximation property of convergents, give the two inequalities 
\begin{equation*}
	\frac{1}{\left(S(q)+1\right)!} < \left|e - \frac{p}{q}\right| < \frac{1}{q^2},
\end{equation*}
respectively, and \eqref{S} follows.

(ii) The inequality \eqref{d} certainly holds if $q=1$. If $q>1$, it follows from part (i), since $n! = dq$ implies $S(q)\leq n$.
\end{proof}

As an application, since $n>2$ in \eqref{d} implies $d>1$, we obtain that 
\textit{if $p/q$ is a convergent to $e$ with $q>2$, then $q$ cannot 
be a factorial}. (This is a slight improvement of \cite[Corollary~3]{sondow1}.) 

\medskip

\begin{lemma} 
\label{lem2}
For $n\ge 0$, let $s_n$ denote the $n$th partial sum of the 
series \eqref{e} for $e$, and define $A_n$ by the relations 

\begin{equation}
\label{Asum}
	\frac{A_n}{n!} = s_n := \sum_{r=0}^{n}\frac{1}{r!}. 
\end{equation}

If the greatest common divisor of $A_n$ and $n!$ is 
\begin{equation}
	d_n := \gcd(A_n,n!),
\end{equation}
then 
\begin{equation}
	d_n d_{n+1} d_{n+2} \le (n+3)!.
\end{equation}
\end{lemma}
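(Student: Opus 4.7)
The plan is to exploit the elementary recurrence
\begin{equation*}
A_{n+1} = (n+1)A_n + 1,
\end{equation*}
which follows immediately from \eqref{Asum}: the identity $s_{n+1}-s_n = 1/(n+1)!$ gives $A_{n+1}/(n+1)! - A_n/n! = 1/(n+1)!$, and multiplying through by $(n+1)!$ yields the displayed formula. Iterating once more,
\begin{equation*}
A_{n+2} = (n+2)A_{n+1} + 1 = (n+1)(n+2)A_n + (n+3).
\end{equation*}
From these two identities I will extract the key coprimality facts. First, any common divisor of $A_n$ and $A_{n+1}$ divides $A_{n+1} - (n+1)A_n = 1$, so $\gcd(A_n, A_{n+1}) = 1$. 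Second, in the same way, any common divisor of $A_n$ and $A_{n+2}$ must divide $n+3$.

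Since $d_n \mid A_n$, $d_{n+1}\mid A_{n+1}$, and $d_{n+2} \mid A_{n+2}$, these statements pass to the $d_i$: we have $\gcd(d_n, d_{n+1}) = \gcd(d_{n+1}, d_{n+2}) = 1$ and $\gcd(d_n, d_{n+2}) \mid n+3$. On the other hand, because $d_n \mid n!$, $d_{n+1} \mid (n+1)!$, and $d_{n+2} \mid (n+2)!$ by definition, each $d_i$ divides $(n+2)!$, and therefore
\begin{equation*}
\operatorname{lcm}(d_n, d_{n+1}, d_{n+2}) \;\big|\; (n+2)!.
\end{equation*}

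To assemble these into the desired bound, I intend to invoke the elementary identity that for positive integers $a, b, c$ with $\gcd(a,b) = \gcd(b,c) = 1$,
\begin{equation*}
abc = \operatorname{lcm}(a,b,c)\cdot \gcd(a,c).
\end{equation*}
This is verified prime by prime: when $v_p(b) = 0$ it reduces to the familiar $ac = \operatorname{lcm}(a,c)\gcd(a,c)$, and otherwise the coprimality with $b$ forces $v_p(a) = v_p(c) = 0$, so both sides have $p$-adic valuation equal to $v_p(b)$. Applying this identity to $(a,b,c) = (d_n, d_{n+1}, d_{n+2})$ gives
\begin{equation*}
d_n d_{n+1} d_{n+2} = \operatorname{lcm}(d_n, d_{n+1}, d_{n+2}) \cdot \gcd(d_n, d_{n+2}) \le (n+2)!\,(n+3) = (n+3)!,
\end{equation*}
which is the claim. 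I do not anticipate any real obstacle in this argument: the only nontrivial interaction among the three $d_i$ is between $d_n$ and $d_{n+2}$, and the iterated recurrence pins down that interaction sharply enough to match the right-hand side exactly.
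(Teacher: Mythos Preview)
Your proof is correct and follows essentially the same route as the paper's: both derive the recurrences $A_{n+1}=(n+1)A_n+1$ and $A_{n+2}=(n+1)(n+2)A_n+(n+3)$, extract the coprimality facts $\gcd(d_n,d_{n+1})=\gcd(d_{n+1},d_{n+2})=1$ and $\gcd(d_n,d_{n+2})\mid n+3$, and combine these with $d_n,d_{n+1},d_{n+2}\mid (n+2)!$ to conclude. The only difference is that you make the final divisibility step explicit via the identity $abc=\operatorname{lcm}(a,b,c)\cdot\gcd(a,c)$, whereas the paper simply asserts that $d_n d_{n+1} d_{n+2}$ divides $(n+2)!\,(n+3)=(n+3)!$.
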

\begin{proof}
From the recursion
$s_{n+1} = s_n + \frac{1}{(n+1)!}$
we have the relations
\begin{equation} \label{seqA}
	A_{n+1} = (n+1)A_n + 1
\end{equation}
and 
\begin{equation} \label{seqA2}
	A_{n+2} = (n+2)(n+1)A_n + (n + 3)
\end{equation}
for $n \ge 0$. Hence $\gcd(d_n,d_{n+1}) = \gcd(d_{n+1},d_{n+2}) = 1$, and 
$\gcd(d_n,d_{n+2})$ divides $(n+3)$. It follows, since $d_n$, $d_{n+1}$, $d_{n+2}$ all divide $(n+2)!$, that the product $d_n d_{n+1} d_{n+2}$ divides 
the product $(n+2)!(n+3) = (n+3)!$. This implies the result. 
\end{proof}

\section{Partial Sums vs. Convergents}
We first prove a weak form of Conjecture \ref{conj1}.

\begin{theorem} \label{thm1} \label{3cps} 
Given any three consecutive partial sums 
$s_n, s_{n+1}, s_{n+2}$ of series \eqref{e} for $e$, at most two of them 
are convergents to $e$. 
\end{theorem}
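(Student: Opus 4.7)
The plan is to argue by contradiction: suppose $s_n$, $s_{n+1}$, and $s_{n+2}$ are all convergents to $e$. Writing each partial sum in lowest terms as $s_m = (A_m/d_m)/(m!/d_m)$, its reduced denominator is $m!/d_m$. For $m \ge 2$ the sum $s_m$ is never an integer, since $A_m \equiv 1 \pmod{m}$ (only the $r=m$ term in $\sum_{r=0}^{m} m!/r!$ fails to be a multiple of $m$), so the reduced denominator exceeds $1$ and Lemma~\ref{lem1}(ii) is applicable at each of $m = n,\,n+1,\,n+2$.

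Applying Lemma~\ref{lem1}(ii) in each of the three cases gives the three lower bounds $d_m^2 > m!/(m+1)$. Multiplying them yields
\[
	(d_n\,d_{n+1}\,d_{n+2})^2 > \frac{n!\,(n+1)!\,(n+2)!}{(n+1)(n+2)(n+3)}.
\]
Lemma~\ref{lem2} supplies the complementary upper bound $d_n d_{n+1} d_{n+2} \le (n+3)!$. Squaring the latter, inserting it into the left-hand side, and simplifying (the factorials telescope cleanly) reduces everything to the purely numerical inequality
\[
	n! < (n+1)(n+2)^2(n+3)^3,
\]
which must hold whenever all three partial sums are convergents to~$e$.

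I expect the main subtlety to be that this size bound is fairly delicate: a direct check shows the displayed inequality holds (uselessly) for $n \le 9$, and fails for every $n \ge 10$, eventually by a wide margin by Stirling's formula. Thus the contradiction closes the argument at once for all $n \ge 10$. The remaining small cases $0 \le n \le 9$ I would dispose of by direct computation: list the partial sums $s_0,\ldots,s_{11}$ in lowest terms, compare with the convergents $p_k/q_k$ of $e$, and observe that only $s_1 = 2/1$ and $s_3 = 8/3$ occur as convergents in this range, so no window of three consecutive $s_m$ contains three convergents. This finite verification, rather than any conceptual step, is the real obstacle — the Lemma~\ref{lem1}(ii)/Lemma~\ref{lem2} pair is almost, but not quite, strong enough to do the entire job by size alone.
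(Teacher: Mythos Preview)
Your proof is correct and follows essentially the same route as the paper: assume all three are convergents, use Lemma~\ref{lem1}(ii) to bound each $d_{n+j}$ from below, combine with the upper bound from Lemma~\ref{lem2}, deduce a finite bound on $n$, and finish by direct computation. The only difference is that you multiply the three sharp bounds $d_{n+j}^2 > (n+j)!/(n+j+1)$ rather than first weakening each to $d_{n+j}^2 > n!/(n+1)$ as the paper does, which yields the slightly tighter cutoff $n\le 9$ (versus $n\le 13$) and correspondingly fewer cases to verify; your aside about $s_m$ not being an integer is unnecessary, since Lemma~\ref{lem1}(ii) already covers the case $q=1$.
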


\begin{proof}
Suppose on the contrary that, for some fixed $n \geq 0$, the sums 
$s_n, s_{n+1}, s_{n+2}$ are all convergents to $e$. Then, using
Lemma~\ref{lem1} part~(ii) and the notation in Lemma \ref{lem2}, 
\begin{equation} \label{eq4}
	d_{n+j}^2 > \frac{(n+j)!}{n+j+1} \ge \frac{n!}{n+1}
\end{equation}
for $j =0,1,2$. Hence, using Lemma \ref{lem2}, 
\begin{equation}
	\left(\frac{n!}{n+1}\right)^3 < [(n+3)!]^2.
\end{equation}
This implies that $n\le13$. (\textit{Proof}. By induction, the reverse inequality 
holds for $n>13$.) But, by computation, only two of the partial sums 
$s_0, s_1, \dots, s_{15}$ are convergents to $e$ (namely, $s_1=2$ and $s_3=8/3$). This 
contradiction completes the proof.
\end{proof}

The next result is a generalization of an asymptotic version of Theorem \ref{thm1}.

\begin{theorem} \label{thm2} \label{kcps} 
For any positive integer $k$, there exists a constant $n(k)$ such that if $n\ge n(k)$, then 
among the k consecutive partial sums $s_n, s_{n+1}, \dots, s_{n+k-1}$ of 
series \eqref{e} for $e$, at most two are convergents to $e$.
\end{theorem}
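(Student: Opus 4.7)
The plan is to mirror the proof of Theorem~\ref{thm1}: combine the lower bound on $d_{n+c}$ furnished by Lemma~\ref{lem1}(ii) whenever $s_{n+c}$ is a convergent, against an upper bound on the product $D_k(n) := d_n d_{n+1}\cdots d_{n+k-1}$ obtained by generalizing Lemma~\ref{lem2}. I argue by contradiction: suppose three of the partial sums, say $s_{n+c_1}, s_{n+c_2}, s_{n+c_3}$ with $0 \le c_1 < c_2 < c_3 \le k-1$, are convergents, and I seek a contradiction for all $n$ beyond some threshold $n(k)$.

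The new step is a block-version of Lemma~\ref{lem2}. Iterating \eqref{seqA} yields, for $0 \le i < j$,
\begin{equation*}
  A_{n+j} = \frac{(n+j)!}{(n+i)!}\,A_{n+i} + B_{i,j}(n), \qquad B_{i,j}(n) := \sum_{r=n+i+1}^{n+j} \frac{(n+j)!}{r!},
\end{equation*}
where $B_{i,j}(n)$ is a positive integer and, as a function of $n$, a polynomial of degree $j-i-1$; consequently $\gcd(d_{n+i},d_{n+j})$ divides $B_{i,j}(n)$ (which specializes to $\gcd(d_{n+i},d_{n+i+1})=1$). Working one prime at a time, for each prime $p$ set $e_i := v_p(d_{n+i})$ and let $m$ attain $\max_i e_i$: then $e_m \le v_p((n+k-1)!)$ and, for $i \neq m$, $e_i = \min(e_i,e_m) \le v_p\bigl(B_{\min(i,m),\max(i,m)}(n)\bigr)$. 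Enlarging the $k-1$ pairs containing $m$ to the full family of $\binom{k}{2}$ pairs makes the bound independent of $p$, and summing on $p$ gives
\begin{equation*}
  D_k(n) \ \text{divides}\ (n+k-1)!\,Q_k(n), \qquad Q_k(n) := \prod_{0 \le i < j \le k-1} B_{i,j}(n),
\end{equation*}
so $D_k(n) \le (n+k-1)!\,Q_k(n)$, with $Q_k$ a polynomial in $n$ of degree $\binom{k}{3}$.

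Combining with Lemma~\ref{lem1}(ii), which gives $d_{n+c_\ell}^2 > (n+c_\ell)!/(n+c_\ell+1) \ge n!/(n+k)$ for $\ell = 1,2,3$, and the trivial bound $d_{n+i} \ge 1$, one obtains
\begin{equation*}
  \bigl[(n+k-1)!\,Q_k(n)\bigr]^2 \ \ge\ D_k(n)^2 \ \ge\ (d_{n+c_1} d_{n+c_2} d_{n+c_3})^2 \ >\ \frac{(n!)^3}{(n+k)^3}.
\end{equation*}
Inserting the elementary estimate $(n+k-1)! \le n!\,(n+k)^{k-1}$ and dividing by $(n!)^2$ reduces this to an inequality of the form $\text{(polynomial in }n\text{)} > n!$, which fails once $n$ is large enough in terms of $k$; any such threshold serves as $n(k)$. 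The obstacle I anticipate is the prime-by-prime bookkeeping in the upper bound on $D_k(n)$: since the index $m$ of maximal valuation varies with $p$, the uniform enlargement to all $\binom{k}{2}$ pairs is what decouples the bound from $p$ and yields the clean polynomial factor $Q_k(n)$, which is ultimately swallowed by $n!$.
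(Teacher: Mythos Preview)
Your proof is correct and follows essentially the same route as the paper: iterate the recursion to get $A_{n+j}$ in terms of $A_{n+i}$ plus a polynomial remainder (your $B_{i,j}(n)$ is precisely the paper's $F_{j-i}(n+i)$), use these remainders to control the pairwise $\gcd$'s of the $d_{n+i}$, package them into a single polynomial factor multiplying $(n+k-1)!$ that bounds the product $D_k(n)$, and then play this off against the three lower bounds from Lemma~\ref{lem1}(ii). Your explicit prime-by-prime bookkeeping, and the definition of $Q_k(n)$ as the product over all $\binom{k}{2}$ pairs, are a bit more transparent than the paper's recursively defined $G_j$ (and yield the same degree $\binom{k}{3}$); you also sidestep Stirling by the direct estimate $(n+k-1)!\le n!(n+k)^{k-1}$, but these are cosmetic differences rather than a genuinely different argument.
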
 

\begin{proof}
We use the notation in Lemma \ref{lem2}.

Define polynomials 
$F_1(x), F_2(x), \dots $ in $\mathbb{Z}[x]$ by the recursion 
\begin{align*}
	&F_j(x) := (x+j)F_{j-1}(x)+1, &F_1(x) := 1.
\end{align*}
 Using \eqref{seqA} and induction on $j$, we obtain the formula
\[
	A_{i+j} = (i+j)(i+j-1)\cdots(i+1)A_i + F_j(i)
\]
 for $i=0,1,\dots$ and $j =1,2,\dots$. It follows that 
\begin{align} \label{eq5}
	&\gcd(d_i,d_{i+j}) \mid F_j(i) & (i\ge0, j\ge1).
\end{align}

Now define polynomials $G_0(x), G_1(x), \dots$ in $\mathbb{Z}[x]$ recursively by
\begin{align} \label{eq6}
 	&G_j(x) := F_1(x)F_2(x) \cdots F_j(x)G_{j-1}(x), & G_0(x) := 1.
\end{align}
Since $d_i, d_{i+1},\dots,d_{i+j}$ all divide $(i+j)!$, relations \eqref{eq5} and 
\eqref{eq6} imply that the product $d_i d_{i+1}\cdots d_{i+j}$ divides the product $(i+j)!G_j(i)$, so that 
\begin{align} \label{eq7}
	&d_i d_{i+1}\cdots d_{i+j} \le (i+j)!G_j(i)	&(i\ge0, j\ge1).
\end{align}

To prove the theorem, fix $k$ and suppose on the contrary that, for infinitely many 
values of $n$, among $s_{n+1}, s_{n+2}, \dots, s_{n+k}$ there are (at least) three 
convergents to $e$ (so that $k \ge 3$), say $s_{n+a}, s_{n+b}, s_{n+c}$, where 
$1 \le a < b < c \le k$. Then, by Lemma~\ref{lem1} part~(ii), the inequalities \eqref{eq4} 
hold with $j=a, b, c$. It follows, using \eqref{eq7} with $i=n+1$ and $j=k-1$, that

\[
	\left(\frac{n!}{n+1}\right)^3 < [(n+k)!G_k(n)]^2.	
\]
Since $k$ is fixed and $G_k$ is a polynomial, Stirling's formula implies that 
$n$ is bounded. This is a contradiction, and the theorem is proved.
\end{proof}

Our final result toward Conjecture \ref{conj1} is an immediate consequence of Theorem \ref{thm2}. 

\begin{cor} \label{cor1}
Almost all partial sums of the Taylor series for $e$ are \emph{not} convergents to $e$.
\end{cor}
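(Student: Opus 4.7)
The plan is to interpret \emph{almost all} in the standard sense of natural density: the set of indices $n$ for which $s_n$ is a convergent to $e$ should have density zero in $\mathbb{N}$. Given Theorem~\ref{thm2}, this reduces to a routine counting argument.

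Fix an arbitrary $\epsilon > 0$ and choose a positive integer $k$ with $2/k < \epsilon$. By Theorem~\ref{thm2} applied to this $k$, there is a threshold $n(k)$ such that every window of $k$ consecutive partial sums $s_n, s_{n+1}, \ldots, s_{n+k-1}$ with $n \ge n(k)$ contains at most two convergents to $e$.

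For any large $N$, I would partition the indices in the interval $[n(k), N]$ into consecutive disjoint blocks of length $k$ (allowing a possibly shorter final block). There are at most $\lceil N/k \rceil + 1$ such blocks, and by the previous paragraph each contributes at most two convergents. Adding the trivial bound of $n(k)$ for the indices below the threshold, the number of $n \le N$ with $s_n$ a convergent to $e$ is at most $n(k) + 2\lceil N/k \rceil + 2$. Dividing by $N$ and sending $N \to \infty$ gives a $\limsup$ of at most $2/k < \epsilon$. Since $\epsilon$ was arbitrary, the density of such indices is $0$, which is exactly the assertion that almost all partial sums fail to be convergents.

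There is no real obstacle: Theorem~\ref{thm2} has already done the substantial work by forcing a uniform gap structure among the convergent partial sums, and the remaining passage from ``at most two per window of length $k$'' to ``density zero'' is a standard exhaustion. The only thing to be a little careful about is not double counting across blocks, which is handled by choosing them to be disjoint.
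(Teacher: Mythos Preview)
Your argument is correct and follows the same route as the paper: the corollary is deduced directly from Theorem~\ref{thm2}. The paper simply declares it an ``immediate consequence'' without spelling out the density computation, whereas you have made the passage from ``at most two per block of length $k$'' to ``density zero'' explicit; nothing differs in substance.
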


\section{Convergents to $e$ and Powers of Factorials}

In Section 2, we pointed out that if $p/q$ is a convergent to $e$ with $q>2$, then $q$ is not a factorial. In fact, we only need $q>1$, because the convergents to $e$ are $2/1, 3/1, 8/3, \dots$, none of which has denominator $2$.

In this section, we consider the case where $q$ is a power of a factorial. For example, the sixth convergent is $p/q=87/32$, with $q=(2!)^5$.

We obtain a curious result in which the number $e$ appears in two different ways.

\begin{prop} \label{factorial power}
Let $p/q$ be a convergent to $e$. If $q$ is a power of a factorial, say $q=(n!)^k$ with $k>0$, then $n/k < e$.
\end{prop}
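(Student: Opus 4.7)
The plan is to combine Lemma~\ref{lem1}(i) with Stirling's formula, after a preliminary cancellation that exploits the divisibility $(n!)^k\mid (nk)!$. First I would dispose of $q=1$ separately: in that case $n\le 1$, so $n/k\le 1<e$. Henceforth assume $q>1$.

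The central observation is that the multinomial coefficient $\binom{nk}{n,\ldots,n}=(nk)!/(n!)^k$ is an integer, so $(n!)^k\mid (nk)!$ and hence $S(q)\le nk$. Lemma~\ref{lem1}(i) therefore reads
\[
(n!)^{2k}<(nk+1)!.
\]
For $k=1$ this is already $n!<n+1$, which forces $n\le 2$ and hence $n/k\le 2<e$. So assume $k\ge 2$. Using the trivial multinomial bound $\binom{nk}{n,\ldots,n}\le k^{nk}$ (one term of the expansion $k^{nk}=\sum\binom{nk}{a_1,\ldots,a_k}$), I would write
\[
(nk+1)!=(nk+1)\binom{nk}{n,\ldots,n}(n!)^k\le (nk+1)k^{nk}(n!)^k,
\]
cancel a factor of $(n!)^k$, and reduce to $(n!)^k<(nk+1)k^{nk}$.

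Stirling in the form $n!\ge\sqrt{2\pi n}\,(n/e)^n$ now rearranges this to
\[
(2\pi n)^{k/2}\left(\frac{n}{ek}\right)^{nk}<nk+1.
\]
Suppose toward contradiction that $n/k\ge e$. Then $(n/(ek))^{nk}\ge 1$, so $(2\pi n)^{k/2}<nk+1$. But for $k\ge 2$ and $n\ge 1$ the opposite elementary inequality $(2\pi n)^{k/2}\ge 2nk>nk+1$ holds; the crux is $(2\pi)^{k/2}\ge 2k$ for $k\ge 2$, which follows from $2\pi>4$ together with monotonicity of $(k/2)\log(2\pi)-\log(2k)$ (easy since $\log(2\pi)/2>1/k$ for $k\ge 2$). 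This contradiction forces $n/k<e$.

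I expect the main obstacle to be the following subtlety: a direct Stirling comparison of the two sides of $(n!)^{2k}<(nk+1)!$ yields only $n/k\le e(1+o(1))$, not the strict $n/k<e$, because the two sides agree at the boundary $n/k=e$ up to polynomial-in-$n$ factors. The multinomial cancellation above is what opens a gap wide enough for Stirling's $\sqrt{2\pi n}$ correction to produce a genuine contradiction for $k\ge 2$; handling $k=1$ separately via $n!<n+1$ is minor but essential, since the correction factor $(2\pi n)^{k/2}$ is too weak in that case.
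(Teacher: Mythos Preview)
Your proof is correct and follows the same overall arc as the paper's: use the multinomial divisibility $(n!)^k\mid(nk)!$ to get $S(q)\le nk$, invoke Lemma~\ref{lem1}(i) to obtain $(n!)^{2k}<(nk+1)!$, and finish with Stirling plus an elementary inequality.

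The one genuine variation is your intermediate cancellation via the multinomial upper bound $\binom{nk}{n,\ldots,n}\le k^{nk}$. The paper instead applies Stirling's formula to \emph{both} sides of $(n!)^{2k}<(nk+1)!$, arriving at
\[
\left(\frac{n}{ke}\right)^{nk}<\frac{\sqrt{2\pi nk}\,(nk+1)\,e}{(2\pi n)^k},
\]
and then argues that the right side is decreasing in $k$ for $k\ge 2$ and is already below $1$ at $k=2$. Your route only needs the one-sided Stirling lower bound $n!\ge\sqrt{2\pi n}(n/e)^n$, at the cost of the extra combinatorial input $\binom{nk}{n,\ldots,n}\le k^{nk}$; the endgame inequality $(2\pi n)^{k/2}>nk+1$ is correspondingly cleaner than the paper's monotonicity check. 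Both handle the small cases ($q=1$, $k=1$) separately in the same spirit.
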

\begin{proof}
Using the discussion above, we see that it suffices to prove the inequality when $n\ge 2$ and $k\ge 2$. In that case $q>1$, and so the inequality \eqref{S} holds. Since $q=(n!)^k$ divides $(nk)!$ (the quotient is a multinomial coefficient), $S(q)\leq nk$. Thus \eqref{S} implies
\begin{equation*}
       (n!)^{2k} < (nk+1)! = (nk)!(nk+1).
\end{equation*}
Using Stirling's formula
\begin{equation*}
      n! = \sqrt{2\pi n}\left(\frac{n}{e}\right)^n e^{\lambda_n} \qquad (0<\lambda_n<1),
\end{equation*}
we derive
\begin{equation*}
      (2\pi n)^k \left(\frac{n}{e}\right)^{2nk} < \sqrt{2\pi nk} \left(\frac{nk}{e}\right)^{nk} (nk+1)e.
\end{equation*}
Write this as
\begin{equation*}
      \left(\frac{n}{ke}\right)^{nk} < \frac{\sqrt{2\pi nk}(nk+1)e}{(2\pi n)^k}.
\end{equation*}
For fixed $n\geq 2$, the right side is a decreasing function of $k$, for $2\leq k < \infty$, and its value at $k=2$ is less than $1$. Therefore, $n/k<e$.
\end{proof}

\section{A Connection With The Primes 2, 5, 13, 37, 463}

In this section we show a surprising connection between the Taylor series \eqref{e} for $e$ and certain prime numbers. We use the notation in Lemma \ref{lem2}.

For $n\geq 0$, let $N_n$ denote the numerator of the $n$th partial sum $s_n$ in lowest terms, so that
\[
	N_n := \frac{A_n}{d_n}.
\]
Setting $R_n$ equal to the greatest common divisor of the reduced 
numerators $N_n$ and $N_{n+2}$ (compare relation \eqref{seqA2}),
\[
	R_n := \gcd(N_n, N_{n+2}),
\]
we find that the sequence $R_0, R_1, \ldots$ begins
\[
	1, 2, 5, \{1\}^7, 13, \{1\}^{23}, 37, \{1\}^{425}, 463, 1, 1, \ldots,
\]
where $\{1\}^k$ stands for a string of ones of length $k$. The terms $2, 5, 13, 37,$ and $463$ are primes. In fact, we prove the following result.

\begin{theorem} \label{Rseq}
The sequence $R_0, R_1, \ldots$ consists of ones and all primes in the set
\[
	P^{\ast} := \{ p \text{ prime}: 0! - 1! + 2! - 3! + 4! - \cdots + 
(-1)^{p-1}(p-1)! \equiv 0 \pmod p \}.
\]
More precisely, $R_1 = 2$, and $R_{p-3} = p$ if $p \in P^{\ast}$ is odd;
otherwise, $R_n = 1$.
\end{theorem}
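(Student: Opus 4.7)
The plan rests on the recursion $A_{n+2}=(n+2)(n+1)A_n+(n+3)$ from Lemma~\ref{lem2}, which immediately gives $\gcd(A_n,A_{n+2})=\gcd(A_n,n+3)$, hence $R_n \mid \gcd(A_n,n+3)$. In particular, every prime $q$ dividing $R_n$ must divide $n+3$, so $q\le n+3$.

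First I would prove a reduction $A_n \equiv A_t \pmod q$ for every prime $q$, where $t := n \bmod q$. In the sum $A_n=\sum_{r=0}^n n!/r!$, the term $n!/r! = n(n-1)\cdots(r+1)$ is divisible by $q$ whenever the range $\{r+1,\ldots,n\}$ contains a multiple of $q$, i.e.\ whenever $r<n-t$; the surviving terms $r\in\{n-t,\ldots,n\}$ reduce factor-by-factor modulo $q$ (using $n\equiv t \pmod q$) and reassemble into $A_t$.

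Next, I would restrict the primes $q$ that can divide $R_n$. The condition $q\mid R_n$ forces $v_q(A_n)>v_q(n!)$ and $v_q(A_{n+2})>v_q((n+2)!)$; rewriting the recursion as $n+3 = A_{n+2}-(n+2)(n+1)A_n$ then yields
\[
  v_q(n+3) > v_q((n+2)!).
\]
Now suppose $q\le n$ and $q\mid n+3$, so that $n+3=mq$ with $m\ge 2$. Then $v_q(n+3)=1+v_q(m)$, while factoring the $m-1$ multiples of $q$ out of $(mq-1)!$ gives $v_q((n+2)!)=(m-1)+v_q((m-1)!)$. A short case check shows $1+v_q(m)\le (m-1)+v_q((m-1)!)$ for every pair $(q,m)$ with $m\ge 2$, except for $(q,m)=(2,2)$, which corresponds to $n=1<q$. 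Hence $q>n$, and combined with $q\mid n+3$ and $q\in\{n+1,n+2,n+3\}$ only two possibilities remain: $(q,n)=(2,1)$, or $q=n+3$ prime.

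The heart of the proof is the case $q = p := n+3$. Substituting $j=p-3-r$ in $A_{p-3} = \sum_{r=0}^{p-3}(p-3)!/r!$ and reducing each factor $p-i\equiv -i\pmod p$ collapses each summand to $(-1)^j(j+2)!/2$, so that
\[
  2A_{p-3} \equiv \sum_{j=0}^{p-3}(-1)^j(j+2)! \equiv \sum_{r=0}^{p-1}(-1)^r r! \pmod p,
\]
where the missing $r=0,1$ terms cancel. For odd $p$ this is precisely the condition $p\in P^{\ast}$. The recursion also gives $A_{p-1}\equiv 2A_{p-3}\pmod p$, so $p\mid A_{p-3}$ forces $p\mid A_{p-1}$; combined with $v_p((p-3)!)=v_p((p-1)!)=0$, both $p\mid N_{p-3}$ and $p\mid N_{p-1}$ follow, so $p\mid R_{p-3}$. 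Since $R_{p-3}\mid p$, we conclude $R_{p-3}=p$. The exceptional case $(q,n)=(2,1)$ is verified by direct computation, giving $R_1=2$. The main obstacle is the valuation bookkeeping in the exclusion step; it is noteworthy that the sole exception $(q,m)=(2,2)$ in that calculation is exactly the source of the anomalous value $R_1=2$.
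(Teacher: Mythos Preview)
Your proof is correct and follows the same overall plan as the paper—use the recursion to force $R_n\mid(n+3)$, pin the prime factors of $R_n$ down to $n+3$, then link $A$ modulo $p=n+3$ to the alternating factorial sum defining $P^{\ast}$—but two of the key steps are implemented differently.

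Where the paper asserts that $R_n\nmid n!$ and then invokes the elementary primality criterion of Lemma~\ref{prime} (``$p>4$ is prime iff $p\nmid(p-3)!$'') to conclude that $R_n=n+3$ is prime, you instead run a $p$-adic valuation argument: from $q\mid N_n$ and $q\mid N_{n+2}$ you extract $v_q(n+3)>v_q((n+2)!)$ and check by hand that this forces $q>n$. Your route is more self-contained and in fact makes explicit a point the paper leaves terse. For the link to $P^{\ast}$, the paper works with $A_{p-1}\bmod p$ (Lemma~\ref{PandA}) via the re-indexing $A_{n-1}\equiv\sum_{r=0}^{n-1}(-1)^r r!\pmod n$, and only then passes to $A_{p-3}$ through the recursion; you go straight to $A_{p-3}$ with the substitution $j=p-3-r$, which is equivalent but slightly more direct since $R_{p-3}$ is the object of interest.

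One small remark: your reduction $A_n\equiv A_{n\bmod q}\pmod q$ is correct but never used in the rest of the argument—once you have forced $q=n+3>n$, it degenerates to $A_n\equiv A_n$. You can safely drop that paragraph.
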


 Michael Mossinghoff \cite{mossinghoff} has calculated that 2, 5, 13, 37, 
 463 are the only elements of $P^{\ast}$ less than 150 million. On the 
 other hand, at the end of this section we give a heuristic argument that 
 the set $P^{\ast}$ should be infinite, but very sparse. For this problem 
 and a related one on primes and alternating sums of factorials, see 
 \cite[B43]{guy} (where the set $P^{\ast}$ is denoted instead by $S$) and 
 \cite{zivkovic}. For $R_n$, see \cite[Sequence A124779]{oeis}.

 Before proving Theorem \ref{Rseq}, we establish two lemmas. The first 
 uses the numbers $A_n$ to give an alternate characterization of the set 
 $P^{\ast}$.

\begin{lemma} 
\label{PandA}
A prime $p$ is in $P^{\ast}$ if and only if $p$ divides $A_{p-1}$.
\end{lemma}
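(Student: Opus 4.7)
\medskip

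The plan is to directly compute $A_{p-1}\pmod p$ and show it equals (up to sign arrangement) the alternating factorial sum $\sum_{k=0}^{p-1}(-1)^k k!$ modulo $p$ that defines membership in $P^{\ast}$.

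First I would use the definition \eqref{Asum} to write
\[
    A_{p-1} = (p-1)!\sum_{r=0}^{p-1}\frac{1}{r!} = \sum_{r=0}^{p-1}\frac{(p-1)!}{r!} = \sum_{r=0}^{p-1}(p-1)(p-2)\cdots(r+1),
\]
where the $r=p-1$ term is the empty product, equal to $1$. Next I would reindex by setting $k := p-1-r$, so that $r$ running from $0$ to $p-1$ corresponds to $k$ running from $p-1$ down to $0$, and the factor $(p-1)!/r!$ becomes the descending product
\[
    (p-1)(p-2)\cdots(p-k)
\]
of length $k$ (empty when $k=0$).

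The core observation is then a straightforward congruence: modulo $p$, each factor satisfies $p-j\equiv -j$, hence
\[
    (p-1)(p-2)\cdots(p-k)\equiv (-1)^k\, k!\pmod p.
\]
Summing over $k=0,1,\dots,p-1$ gives
\[
    A_{p-1}\equiv \sum_{k=0}^{p-1}(-1)^k k! \pmod p,
\]
and comparing with the definition of $P^{\ast}$ yields $p\mid A_{p-1}$ if and only if $p\in P^{\ast}$.

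There is essentially no obstacle here: the argument is a one-line manipulation once the sum for $A_{p-1}$ is written in the descending form $(p-1)!/r!=(p-1)(p-2)\cdots(r+1)$. The only thing to be careful about is the bookkeeping of the empty product (the $k=0$, equivalently $r=p-1$, term contributes $1=(-1)^0 0!$ on both sides), so that the ranges of summation match exactly.
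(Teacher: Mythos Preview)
Your proof is correct and follows essentially the same route as the paper: write $A_{p-1}=\sum_{r=0}^{p-1}(p-1)!/r!$, reindex to get descending products $(p-1)(p-2)\cdots(p-k)$, and reduce each factor $p-j\equiv -j\pmod p$ to obtain $\sum_{k=0}^{p-1}(-1)^k k!$. The only cosmetic difference is that the paper states and proves the congruence $A_{n-1}\equiv\sum_{r=0}^{n-1}(-1)^r r!\pmod n$ for an arbitrary integer $n>0$ before specializing to $n=p$, whereas you work with a prime $p$ throughout; since primality is never actually used in the manipulation, the two arguments are the same.
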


\begin{proof}
We show that the congruence
\[
	0! - 1! + 2! - 3! + 4! - \cdots + (-1)^{n-1}(n-1)! \equiv A_{n-1} \pmod n
\]
holds if $n > 0$. The lemma follows by setting $n$ equal to a prime $p$.

We multiply the relations \eqref{Asum} by $n!$ and replace $n$ with $n-1$. 
Re-indexing the sum, we obtain
\begin{align*}
	A_{n-1}=\sum_{r=0}^{n-1}\frac{(n-1)!}{r!}=\sum_{r=0}^{n-1}\frac{(n-1)!} {(n-1-r)!}&=\sum_{r=0}^{n-1}(n-1)(n-2)\cdots (n-r)\\
&\equiv \sum_{r=0}^{n-1}(-1)^r r! \pmod n . \qedhere
\end{align*}
\end{proof}

The next lemma gives a simple criterion for primality.

\begin{lemma} 
\label{prime}
An integer $p>4$ is prime if and only $p$ does not divide $(p-3)!$.
\end{lemma}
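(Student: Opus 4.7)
The plan is to prove both implications by direct divisibility arguments using the factorization of $p$. For the forward direction, assume $p$ is prime. Then every factor appearing in $(p-3)! = 1 \cdot 2 \cdots (p-3)$ is a positive integer strictly less than $p$, hence coprime to $p$. Therefore $p \nmid (p-3)!$, and this direction requires no further work.

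For the converse I would prove the contrapositive: if $p > 4$ is composite, then $p \mid (p-3)!$. Write $p = ab$ with $1 < a \le b < p$, and split into two cases. In Case~1, $a < b$: from $a \ge 2$ we get $b \le p/2$, and since $p$ is composite with $p > 4$ we have $p \ge 6$, so $b \le p/2 \le p - 3$; thus $a$ and $b$ occur as distinct factors in $(p-3)!$, and $p = ab$ divides it. In Case~2, $a = b$, so $p = a^2$; the hypothesis $p > 4$ forces $a \ge 3$, and then $(a-3)(a+1) \ge 0$ rearranges to $2a \le a^2 - 3 = p - 3$. Hence $a$ and $2a$ are distinct factors appearing in $(p-3)!$, so their product $2a^2 = 2p$ divides $(p-3)!$, yielding $p \mid (p-3)!$.

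The only obstacle, a mild one, is the perfect-square case, where we cannot produce two distinct proper divisors of $p$ and must instead use the multiple $2a$ to manufacture a factor of $p$. This case is also the reason the hypothesis $p > 4$ is sharp: for $p = 4$ the inequality $2a \le p - 3$ would demand $4 \le 1$, which fails, matching the fact that $4 \nmid 1 = (4-3)!$.
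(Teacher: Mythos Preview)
Your proof is correct and follows essentially the same approach as the paper: the forward direction is immediate, and for the contrapositive you split $p=ab$ into the cases $a<b$ and $a=b$, using in the square case the inequality $(a-3)(a+1)\ge 0$ to place both $a$ and $2a$ among the factors of $(p-3)!$. The only cosmetic difference is in how you bound $b\le p-3$ in Case~1 (via $b\le p/2\le p-3$ rather than the paper's $2p-4>p\ge 2b$), which amounts to the same inequality.
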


\begin{proof}
 The condition is clearly necessary. To prove sufficiency, we show that if 
 $p>4$ is not prime, say $p=ab$ with $b\geq a\geq 2$, then $p \mid (p-3)!$.

Since $2p-4>p\geq 2b$, we have $p-3\geq b$. In case $b>a$, we get $ab \mid (p-3)!$. In case $b=a$, we have $a\geq 3$, so $p-2a-3=a^2-2a-3=(a+1)(a-3)\geq 0$, and $p-3\geq 2a>a$ implies $(a\cdot 2a) \mid (p-3)!$. Thus, in both cases, $p \mid (p-3)!$.
\end{proof}

Now we give the \textbf{proof of Theorem \ref{Rseq}}.
\begin{proof}
We compute $N_0=1$, $N_1=2$, $N_2=5$, and $N_3=8$. Hence $R_0=1$ and $R_1=2\in P^{\ast}$.

Now fix $n>1$ and assume $R_n\neq 1$. Then $R_n$ divides both $A_n$ and 
$A_{n+2}$, but does not divide $n!$. From \eqref{seqA2} we see that $R_n \mid (n+3)$. It follows, using Lemma \ref{prime}, that $R_n=n+3$ is prime. Then Lemma \ref{PandA} implies $R_n\in P^{\ast}$.

It remains to show, conversely, that if $p\in P^{\ast}$ is odd, then 
$R_{p-3}=p$. Setting $n=p-3$, Lemma \ref{PandA} gives $p \mid A_{n+2}$. Then, as $n\geq 0$ and $p=n+3$, relation \eqref{seqA2} implies $p \mid A_n$. On the other hand, since $p>n$, the prime $p$ does not divide $n!$. It follows that $p \mid R_n$. Recalling that $R_n\neq 1$ implies $R_n$ is prime, we conclude that $R_n=p$, as desired.
\end{proof}

\noindent \textbf{ A heuristic argument that $\boldsymbol{P^{\ast}}$ is infinite but very sparse.} The following heuristics are naive. The prime 463 looks ``random,'' so a naive model might be that $0! - 1! + 2! - 3! + 4! - \cdots + (p-1)!$ is a ``random'' number modulo a prime $p$. If it is, the probability that it is divisible by $p$ would be about $1/p$. Now let's also make the hypothesis that the events are independent. Then the expected number of elements of $P^{\ast}$ which do not exceed a bound $x$ would be approximately
\[
	\#\left(P^{\ast}\cap [0,x]\right) \approx \sum_{p\leq x} 
\frac{1}{p} = \log \log x + 0.2614972128\ldots + o(1),
\]
where $p$ denotes a prime. Here the second estimate is a classical asymptotic formula of Mertens (see \cite[p. 94]{finch}). Since $\log \log x$ tends to infinity with $x$, but very slowly, the set $P^{\ast}$ should be infinite, but very sparse.

In particular, the sum of $1/p$ for primes $p$ between 463 and 150,000,000 is about $1.12$. Since this is greater than one, we might expect to find the next (i.e., the sixth) prime in $P^{\ast}$ soon.

\newpage

\section*{Appendix: Periodic Behaviour of Some Recurrence Sequences Related to $\boldsymbol{e}$, Modulo Powers of 2}
\renewcommand{\thesection}{A}
\renewcommand{\thesubsection}{A.\arabic{subsection}}
\setcounter{subsection}{0}
\setcounter{theorem}{0}

Let $A(n)/n!$ be the $n$th partial sum of series \eqref{e} for $e$, and $P(n)/Q(n)$ the $n$th 
convergent of the simple continued fraction for $e$ (note the change of notation 
from $A_n/n!$ and $p_n/q_n$ in the preceding sections). 
If $S$ denotes the integer sequence $S(0), S(1), S(2), \dots$, then we shall use 
the notation $(S \bmod M)$  
to denote the sequence $S(0)$ mod $M$, $S(1)$ mod $M$, $\dots$ .
Here ``$n$ mod $M$'' means the remainder on division of $n$ by $M$: it is a
nonnegative integer rather than an element of $\mathbb{Z}/M \mathbb{Z}$.

In this appendix, we demonstrate a relationship between Conjecture \ref{ps} and (proven and conjectured) arithmetic properties of $(A \bmod M)$ and $(Q \bmod M)$ 
for integer $M \geq 2$. 
We mainly treat the case where $M$ is a power of $2$, 
but our approach to studying $(Q \bmod {p^k})$ and $(A \bmod {p^k})$ 
should also work for odd primes $p$, with similar results.
Such investigations have not yet been undertaken.
%
%
%

The key results are Conjecture \ref{zer}, which locates the zeros of $(A \bmod M)$ 
and $(Q \bmod M)$, and Theorem \ref{mainthm}, in which we prove Conjecture \ref{ps} 
assuming Conjecture \ref{zer}. 
After that, we present some unconditional results about the periods of $(A \bmod M)$
and $(Q \bmod M)$, and discuss some consequences of them.

\medskip

The sequences $A$, $P$, and $Q$ satisfy simple linear recurrences.
Sequence $A$ satisfies recurrence \eqref{seqA} with $A(0) = 1$, and the first few values of $A(n)$ are $1$, $2$, $5$, $16$, $65$, $326$, $1957$, $13700$, $109601$, $986410$, $9864101$, $\dots$ .

Corresponding to the simple continued fraction
\begin{equation*}
	e = [2, 1, 2, 1, 1, 4, 1, 1, 6, 1, 1, 8, \dots] = [b(1), b(2), b(3), \dots]
\end{equation*}
(discovered by Euler -- see, for example, \cite{cohn}) are the recurrences
\begin{align}
	P(n) &= b(n) P(n-1) + P(n-2), & P(0)=1, P(1)=2   \label{pn} \\
	Q(n) &= b(n) Q(n-1) + Q(n-2), & Q(0)=0, Q(1)=1   \label{qn} 
\end{align}
where $b(1) = 2$ and, for $n \ge 2$,
\[
    b(n) = \begin{cases}
          2n/3 & \text{ if } 3 \mid n,\\
          1    & \text{ if } 3 \nmid n. 
          \end{cases}
\]
This correspondence, and the fact that $\gcd(P(n),Q(n))=1$, are well known by the 
general theory of continued fractions. The first few numerators $P(n)$ 
are $1$, $2$, $3$, $8$, $11$, $19$, $87$, $106$, $193$, $1264$, $1457$, $2721$, $\dots$ and 
the first few denominators $Q(n)$ are $0$, $1$, $1$, $3$, $4$, $7$, $32$, $39$, $71$, $465$, $536$, $1001$, $\dots$ .

\subsection{Main Results}

Based on calculations (portions of which are shown in Tables 1-5), we make a conjecture about the location of the zeros of 
$ ( Q \bmod M)$  and $(A \bmod M)$ for $M$ a power of $2$.
First we need a definition.

\begin{definition} For an integer $x$ and prime $p$, let 
\[
  [x]_p = \sup\{p^k:p^k \mid x \text{ and } k \in \mathbb{N}\}
\]
denote the \emph{$p$-factor} of $x$. Note that $[0]_p = \infty$, $[xy]_p = [x]_p [y]_p$, and $1 \le [x]_p \le \abs{x}$ for $x \neq 0$.
\end{definition}

\begin{conj}[Zeros Conjecture]
\label{zer}  
For each $n \ge 0$,
\begin{align*}
 [Q(3n)]_2 &\le 4[n(n+2)]_2,                    \tag{i}\\
 [Q(3n+1)]_2 &\le 2[n+1]_2,                     \tag{ii}\\
 [Q(3n+2)]_2 &= 1,                              \tag{iii}\\
 [A(n)]_2 &\le (n+1)^2.                         \tag{iv}
\end{align*}
\end{conj}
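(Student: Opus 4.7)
The plan is to reformulate the $Q$ recurrence by splitting according to $n \bmod 3$. Setting $u_n := Q(3n)$, $v_n := Q(3n+1)$, and $w_n := Q(3n+2)$, the recurrence \eqref{qn} with $b(3n)=2n$ and $b(3n\pm 1)=1$ gives the system
\begin{align*}
u_n &= 2n\, w_{n-1} + v_{n-1},\\
v_n &= u_n + w_{n-1},\\
w_n &= u_n + v_n = 2u_n + w_{n-1}.
\end{align*}
The last identity immediately yields part (iii): since $w_0 = Q(2) = 1$, induction shows $w_n$ is odd for every $n$, so $[Q(3n+2)]_2 = 1$.

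For part (ii), eliminate $u_n$ to obtain $v_n = (2n+1)w_{n-1} + v_{n-1}$, and telescope:
\begin{equation*}
v_n = 1 + \sum_{k=1}^{n}(2k+1)\,w_{k-1}.
\end{equation*}
Each summand is odd, so $v_n$ is even exactly when $n$ is odd, which matches the bound at depth one. To upgrade to the full claim $[v_n]_2 \le 2[n+1]_2$, I would induct on the exponent $m$: the reduced sequence $(w_j \bmod 2^{m-1})$ is purely periodic (a standard fact for linear recurrences over $\mathbb{Z}/2^{m-1}\mathbb{Z}$), with a period whose binary arithmetic controls when the telescoped sum first vanishes modulo $2^m$. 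Part (i) would then follow from the identity $u_n = v_n - w_{n-1}$ combined with (ii) and (iii), the constant $4$ being traceable to one degree of $2$-adic cancellation in that difference and the $[n+2]_2$ factor to a one-step shift in the index.

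Part (iv) is structurally separate. From $A(n+1)=(n+1)A(n)+1$ one gets the closed form $A(n)=\sum_{r=0}^{n} n!/r!$, and Legendre's formula shows that only the last few summands contribute small $2$-adic valuations. My plan is to show that high-valuation cancellations among these top terms cannot persist: three consecutive values $A(n-2), A(n-1), A(n)$ are linked by a system of congruences modulo high powers of $2$ that forces at least one to have bounded $2$-factor, yielding a polynomial-in-$(n+1)$ bound on $[A(n)]_2$.

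The main obstacle is uniform across the four parts: the telescoping sums and recurrence orbits generically accumulate one factor of $2$ per step, whereas the conjecture demands growth only at a logarithmic rate in the index. A proof would need to expose a structural cancellation mechanism — perhaps a hidden $2$-adic symmetry of the Euler continued fraction, or an identity linking the $A(n)$ to the $Q(k)$ modulo $2^k$ — that repeatedly kills high powers of $2$. Absent such an identity, one is left with a modulus-by-modulus verification, which is precisely the flavour of the tables the authors include as evidence and explains why the statement remains a conjecture.
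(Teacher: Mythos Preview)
The statement is labeled a Conjecture, and the paper proves only part~(iii); parts (i), (ii), and (iv) remain open. Your proof of (iii) via the recursion $w_n = 2u_n + w_{n-1}$ with $w_0 = Q(2) = 1$ odd is correct and is essentially the paper's argument, which writes the same identity as $Q(3n+2) = 2Q(3n) + Q(3n-1)$ and inducts.

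Your additional material---the telescoping formula for $v_n$, the proposed reduction of (i) to (ii) and (iii), and the outline for (iv)---goes beyond what the paper attempts, since the paper offers no proof sketches for those parts at all. As you yourself acknowledge in the final paragraph, none of these sketches closes the gap. One concrete problem worth flagging: deducing (i) from $u_n = v_n - w_{n-1}$ together with (ii) and (iii) cannot work as stated. For even $n$ your own parity analysis shows $v_n$ is odd, and (iii) gives $w_{n-1}$ odd, so $u_n$ is the difference of two odd numbers; the bounds in (ii) and (iii) then say nothing whatsoever about $[u_n]_2$, which could in principle be arbitrarily large. The appearance of the factor $[n+2]_2$ in (i) is therefore not explained by a ``one-step shift'' of (ii), and would require genuinely new input. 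Your closing diagnosis---that a proof needs a structural cancellation mechanism not visible in the raw recurrences---is in line with the paper's own stance that this is a conjecture supported by the tables rather than a theorem.
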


Statement (iii) is easily proven, but it is placed with the others for 
harmony. Statement (iv) is somewhat arbitrary in form and can probably be 
strengthened, but it is difficult to guess the exact truth in this case. By 
contrast, we believe that equality holds in (i) and (ii) infinitely often.

Proof of (iii): using \eqref{qn} twice,
\begin{align*}
   Q(3n+2) &= Q(3n+1) + Q(3n) \\
           &= 2Q(3n) + Q(3n-1).
\end{align*}
Since $Q(2)$ is odd, it follows by induction that $Q(3n+2)$ is odd for $n \geq 0$.

\medskip

Conjecture \ref{zer} implies information about the zeros of $(Q \bmod M)$  as follows: 
\begin{align*}
\text{ if }&& Q(3n)   &\equiv 0 \pmod {2^k} &\text{ then }&& n &\equiv 0,-2 \pmod {2^{k-3}} \\
\text{ if }&& Q(3n+1) &\equiv 0 \pmod {2^k} &\text{ then }&& n &\equiv -1   \pmod {2^{k-1}} 
\end{align*}
for $k \geq 3$ and $k \geq 1$, respectively, while $Q(3n+2)$ is always nonzero modulo an even number.
The connection between (iv) and the location of the zeros of $(A \bmod M)$ is a little fuzzy here. It is clarified somewhat in part (iv) of Conjecture \ref{adic}.

\begin{lemma} 
\label{nmlem}
Let $n>1$ be an integer and $N$ be the unique integer for which $3N \le n < 3(N+1)$. If $m$ is a positive integer such that $Q(n) \le m!$, then $N < m$ and $n < 3m$. 
\end{lemma}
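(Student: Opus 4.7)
The plan is to reduce the lemma to a lower bound of the shape $Q(n) \ge C\,N!$ and then argue by contradiction: if $N \ge m$, that bound will force $Q(n) > m!$, contradicting the hypothesis, so $N < m$; then $n < 3(N+1) \le 3m$ is immediate.

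To produce the lower bound, I first record that the sequence $Q$ is nondecreasing for $n \ge 1$. Indeed, in \eqref{qn} every partial quotient satisfies $b(j) \ge 1$ and $Q(j-2) \ge 0$, so $Q(j) \ge Q(j-1)$; in particular $Q(n) \ge Q(3N)$. Next, I iterate the recurrence in blocks of three. Using $b(3k+3) = 2(k+1)$ and discarding the other nonnegative contributions in \eqref{qn} gives
\[
   Q(3k+3) \;\ge\; 2(k+1)\, Q(3k+2) \;\ge\; 2(k+1)\, Q(3k) \qquad (k \ge 1).
\]
Starting from $Q(3) = 3$ and iterating, a simple induction yields
\[
   Q(3N) \;\ge\; 3 \cdot 2^{N-1}\, N! \;\ge\; 3\,N! \qquad (N \ge 1).
\]

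With this bound, the lemma follows at once. Assume $N \ge 1$ and suppose, for contradiction, that $N \ge m$. Then $N! \ge m!$, and combining the two inequalities above,
\[
   Q(n) \;\ge\; Q(3N) \;\ge\; 3 \cdot 2^{N-1}\, N! \;\ge\; 3\,m! \;>\; m!,
\]
contradicting $Q(n) \le m!$. Hence $N < m$, so $n < 3(N+1) \le 3m$. The leftover case $N = 0$ forces $n = 2$ (since $n > 1$); then $Q(2) = 1 \le m!$ is consistent with any positive integer $m$, and the required inequalities $0 < m$ and $2 < 3m$ are trivial.

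I do not foresee a serious obstacle. The only step that requires some care is organizing the recurrence into blocks of three steps so that the $2n/3$ partial quotient, which is the only source of growth in \eqref{qn}, is explicitly visible and produces the factorial lower bound on $Q(3N)$.
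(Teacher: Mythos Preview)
Your proof is correct and follows essentially the same approach as the paper: both arguments iterate the recurrence \eqref{qn} in blocks of three to extract the factor $2(k+1)$ at each step and obtain the lower bound $Q(3N)\ge 3\cdot 2^{N-1}N!$ (the paper writes this as $(3/2)2^N N!$), then use monotonicity of $Q$ and a direct comparison with $m!$ to force $N<m$. The only cosmetic differences are that the paper checks $n=2,3$ by hand and phrases the bounds as strict inequalities, whereas you isolate the case $N=0$ and work with non-strict inequalities; neither affects the argument.
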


\begin{proof}
First verify the cases $n=2$ and $n=3$ directly. 

Next suppose that $n=3N$ for some $N>1$. Using \eqref{qn} in the form $Q(n) > b(n) Q(n-1)$ (since $Q(n-2) > 0$ for $n>2$), we have $Q(n) = Q(3N) > 2N Q(3N-1) > 2N Q(3N-2) > 2N Q(3N-3)$. Since $Q(3)=3$, it follows that $Q(3N) > 2N \cdot 2(N-1) \cdot 2(N-2) \cdots 2(2) \cdot Q(3) = (3/2) 2^N N! > N!$. Thus if $Q(3N) \le m!$ then $N < m$. 

Finally suppose that $n=3N+1$ or $n=3N+2$ for some $N \ge 1$. If $Q(n) \le m!$ then the same conclusion holds, because $Q(3N) < Q(n)$. So in all cases, $Q(n) \le m!$ implies $N < m$.

From $n < 3(N+1)$ we also have $n < 3m$ since $N+1 \le m$, and this proves 
the result.
\end{proof}

\begin{theorem}   
\label{mainthm} 
Conjecture \ref{zer} implies Conjecture \ref{ps}.
\end{theorem}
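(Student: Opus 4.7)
The plan is to reduce Conjecture \ref{ps} to a small finite computational check by combining the estimate $k < 3n$ from Lemma \ref{nmlem} with the $2$-adic bounds that Conjecture \ref{zer} places on $Q(k)$ and $A(n)$. Suppose, for some $n \ge 1$ and $k \ge 2$, that $A(n)/n! = P(k)/Q(k)$. Since $\gcd(P(k),Q(k)) = 1$, clearing denominators gives the integer identity $A(n)\,Q(k) = n!\,P(k)$, and in particular $Q(k) \mid n!$, whence $Q(k) \le n!$. Lemma \ref{nmlem} then yields $k < 3n$. (The residual case $k = 1$ amounts to $A(n) = 2\cdot n!$, which holds only at $n = 1$ by monotonicity of the partial sums; and $n = 0$ gives $A(0)/0! = 1$, which is not a convergent.)

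Next I would extract $2$-adic information. Writing $v = v_2$ for the $2$-adic valuation (so $[x]_2 = 2^{v(x)}$ in the paper's notation), the identity above yields
\[
v(A(n)) + v(Q(k)) = v(n!) + v(P(k)),
\]
while coprimality forces $\min(v(P(k)), v(Q(k))) = 0$. Legendre's formula gives $v(n!) = n - s_2(n) \ge n - \log_2(n+1) - 1$, which is linear in $n$. By contrast, Conjecture \ref{zer}(iv) translates to $v(A(n)) \le 2\log_2(n+1)$, and parts (i)--(iii), combined with $k < 3n$, translate to $v(Q(k)) \le 2 + 2\log_2(n+2)$; both of these upper bounds are only $O(\log n)$.

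The argument then splits into two cases. If $v(Q(k)) = 0$, then $v(A(n)) \ge v(n!)$ forces $2\log_2(n+1) \ge n - \log_2(n+1) - 1$, bounding $n$. If $v(P(k)) = 0$, then $v(Q(k)) = v(n!) - v(A(n)) \ge n - 3\log_2(n+1) - 1$, and reconciling this with $v(Q(k)) \le 2 + 2\log_2(n+2)$ again bounds $n$. Either way $n \le n_0$ for some explicit constant $n_0$, after which a direct verification against the recurrence for $A$ and the list of convergents of $e$ leaves only the matches $(n,k) = (1,1)$ and $(n,k) = (3,3)$, which is Conjecture \ref{ps}.

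The main obstacle is the bookkeeping that converts Conjecture \ref{zer}(i)--(iii) into the single clean bound $v(Q(k)) \le 2 + 2\log_2(n+2)$ valid for all $k < 3n$, and the extraction of a workable explicit $n_0$; the conceptual heart of the proof is that once $k < 3n$ is known, both $v_2(A(n))$ and $v_2(Q(k))$ are at most logarithmic in $n$, and so cannot balance the linear $v_2(n!)$ appearing in $A(n)\,Q(k) = n!\,P(k)$ for large $n$.
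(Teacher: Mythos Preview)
Your proposal is correct and follows essentially the same route as the paper: clear denominators, use Lemma~\ref{nmlem} to get $k<3n$, then compare $2$-adic valuations via Legendre's formula and the bounds from Conjecture~\ref{zer} to force $n$ below an explicit threshold, leaving a finite check. The only cosmetic differences are that the paper works multiplicatively with $[\cdot]_2$ rather than additively with $v_2$, and it avoids your case split by simply using $v(P(k))\ge 0$ to obtain the single chain $[A(m)Q(n)]_2 \le (m+1)^2\cdot 8(m+1) < [m!]_2 \le [m!P(n)]_2$ for $m>20$.
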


\begin{proof}
Assume that a partial sum of series \eqref{e} is a convergent to $e$, say $A(m)/m! = P(n)/Q(n)$. Write this as 
\begin{equation}
\label{eq1}
A(m) Q(n) = m! P(n).
\end{equation}

The general strategy is as follows: by examining how the $2$-factors of $Q(n)$, $A(m)$, and $m!$ grow, we show that \eqref{eq1} has no solution except for some small values of $m$ and $n$. Specifically, $[A(m)]_2$ and $[Q(n)]_2$ grow slowly whereas $[m!]_2$ grows quickly, so we should expect that
\begin{equation}
\label{eq2}
    [A(m) Q(n)]_2 < [m! P(n)]_2 
\end{equation}
unless $m$ and $n$ are sufficiently small. Since \eqref{eq2} contradicts \eqref{eq1}, we will have shown that \eqref{eq1} has no solutions except possibly those permitted by the exceptions to \eqref{eq2}, which we check by computer.

We will need some preliminary inequalities. Assume that $n>1$ and let $N$ be as in Lemma \ref{nmlem}. 

\begin{itemize}

\item Observe that $4[N(N+2)]_2 \le \max\{8[N]_2,8[N+2]_2\}$ since $\gcd(N,N+2) \le 2$. Then Conjecture \ref{zer} (i)-(iii) imply that
\[
	[Q(n)]_2 \le \max\{8[N]_2, 8[N+2]_2, 2[N+1]_2, 1\} \le 8(N+2)
\]
since $[x]_2 \le x$. 

\item Since $\gcd(P(n), Q(n)) = 1$, there are no solutions to \eqref{eq1} if 
$Q(n) \nmid m!$. So for \eqref{eq1} to hold, it must be that $Q(n) \mid m!$ and in particular $Q(n) \leq m!$. From this we can apply Lemma \ref{nmlem} to deduce that $N \le m-1$.

\item For every positive integer $m$, we have $[m!]_2 \ge 2^m/(m+1)$. This follows from the formula $\text{ord}_p(m!) = (m-\sigma_p(m))/(p-1)$ 
(see \cite[p. 79]{koblitz}), where $p$ is any prime, 
$\text{ord}_p(x) := \log_p([x]_p)$, 
and $\sigma_p(m)$ is the sum of the base-$p$ digits of $m$: take $p=2$ and use $\sigma_2(m) \le \log_2{(m+1)}$. If $m > 20$, then $2^m > 8(m+1)^4$ and thus $[m!]_2 > 8(m+1)^3$.

\item Trivially, $1 \le [P(n)]_2$.

\end{itemize}

For $m>20$, making use of Conjecture \ref{zer} (iv) and the above inequalities, we get 
\[
   [A(m) Q(n)]_2 \le (m+1)^2 \cdot 8(N+2) 
               \le (m+1)^2 \cdot 8(m+1) 
                 < [m!]_2
	       \le [m! P(n)]_2.
\]
Thus \eqref{eq2} holds for $m > 20$ and $n>1$. There are a finite number of remaining cases, since $m \le 20$ implies, by Lemma \ref{nmlem}, that $n < 60$. We verified by computer that \eqref{eq1} has no solution for these cases, with the two exceptions $m=n=1$ and $m=n=3$, corresponding to the convergents $2/1$ and $8/3$.
\end{proof}

\subsection{Periodicity}

In this section we relate some observations about the (actual or apparent) periodicity of $(A \bmod M)$ and $(Q \bmod M)$ for a positive integer $M$.
While independent of the preceding results, they nevertheless seem worth mentioning. 
An eventual proof of Conjecture \ref{zer} would likely make use of such results.

\begin{prop} \label{A_period}
For any integer $M>0$, the sequence 
$( A \bmod M )$ 
is periodic with period exactly $M$. 
\end{prop}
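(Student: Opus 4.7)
The plan is to prove the two assertions separately: first that $M$ is a period of $(A \bmod M)$, and then that no smaller positive integer is.

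For the first part, I would iterate the recurrence $A(n+1) = (n+1)A(n)+1$ starting from $A(0)=1$ to obtain the closed form
\[
    A(n) = \sum_{k=0}^{n} n(n-1)(n-2)\cdots(n-k+1),
\]
where the $k=0$ summand is understood as $1$. Each summand with $k \ge M$ is a product of $M$ or more consecutive integers, hence divisible by $M!$ and in particular by $M$. Consequently, for any $n \ge 0$,
\[
    A(n+M) \equiv \sum_{k=0}^{M-1} (n+M)(n+M-1)\cdots(n+M-k+1) \equiv \sum_{k=0}^{M-1} n(n-1)\cdots(n-k+1) \pmod{M},
\]
the second congruence because each summand is polynomial in $n+M$ and $n+M \equiv n \pmod{M}$. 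If $n \ge M-1$ the right-hand side agrees with $A(n)$ modulo $M$ by the same divisibility argument applied in reverse; if $n < M-1$, the extra summands with $k = n+1,\ldots,M-1$ vanish identically because the factor $0$ appears in each product. Either way, $A(n+M) \equiv A(n) \pmod{M}$.

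For the minimality, I would suppose $d > 0$ is any period of the sequence modulo $M$ and compare the recurrence at shifts $n$ and $n+d$:
\[
    A(n+d+1) = (n+d+1)\,A(n+d) + 1, \qquad A(n+1) = (n+1)\,A(n) + 1.
\]
Reducing modulo $M$ and using $A(n+d) \equiv A(n)$ and $A(n+d+1) \equiv A(n+1)$, the $+1$'s cancel and one obtains $d\cdot A(n) \equiv 0 \pmod{M}$ for every $n \ge 0$. Setting $n=0$ and using $A(0)=1$ yields $M \mid d$; combined with the first part, the minimal period is exactly $M$.

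The only step requiring any real attention is the boundary case $n < M-1$ in the first part, which is dispatched by the observation that $n(n-1)\cdots(n-k+1)=0$ whenever $k > n$. Everything else rests on two elementary facts: a product of $M$ consecutive integers is divisible by $M$, and polynomial identities descend to congruences modulo $M$.
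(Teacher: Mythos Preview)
Your argument is correct. The minimality half is essentially identical to the paper's: both derive $d\cdot A(0)\equiv 0\pmod{M}$ from the recurrence at a shifted index and use $A(0)=1$ to conclude $M\mid d$.

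The periodicity half, however, is handled differently. The paper never writes down the closed form; it simply observes that the recurrence gives $A(M)=M\,A(M-1)+1\equiv 1=A(0)\pmod{M}$, and then a one-line induction on $n$ (replacing the coefficient $M+n+1$ by $n+1$ modulo $M$) yields $A(M+n)\equiv A(n)$ for all $n\ge 0$. Your route via the explicit sum $A(n)=\sum_{k=0}^{n} n(n-1)\cdots(n-k+1)$ and the divisibility of long falling factorials by $M!$ reaches the same conclusion, but at the cost of an extra case split for $n<M-1$ and an appeal to the fact that a product of $M$ consecutive integers is a multiple of $M!$. The paper's approach is shorter and uses nothing beyond the recurrence itself; your approach has the compensating virtue of making the polynomial-in-$n$ structure of $A(n)\bmod M$ completely explicit, which could be useful if one wanted to say more about the values of $(A\bmod M)$ rather than just its period.
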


\begin{proof} 

Since $A(M) = M A(M-1) + 1$, we have $A(M) \equiv 1 \equiv A(0) \pmod M$, and induction on $n$ using \eqref{seqA} gives $A(M+n) \equiv A(n) \pmod M$ for $n \ge 0$.
This last congruence is equivalent to saying that a period $P$ exists and $P | M$.

Next we show that $M | P$. The definition of $P$ gives $A(P) \equiv A(0) \pmod M$, so 
\begin{align*}
A(P+1) &= (P+1)A(P) + 1 \\
        &\equiv (P+1)A(0) + 1        \pmod M \\
        &= A(0)+1 + PA(0) \\
        &= A(1) + P.
\end{align*}
But the definition of $P$ also gives $A(P+1) \equiv A(1) \pmod M$, so  $P \equiv 0 \pmod M$.

Since $P | M$ and $M | P$, we conclude that $P=M$. 
\end{proof} 

\noindent 
\textbf{Remark.} $\,$  This result generalizes to the recurrence 
$S(n) = nS(n-1) + S(0)$ with an arbitrary integer initial value $S(0)$, and 
the result in this case is that 
the period of $(S \bmod M)$  is $M/\gcd(M,S(0))$. \\

 
One would like to prove a similar result for $Q$; here we have only met with partial success. Following are a proof that a period exists, and a conjecture about the value of that period.

\begin{prop} \label{prop2}
For any integer $M>0$, the sequence 
$( Q \bmod M )$
is periodic, with period at most $3M^3$. 
\end{prop}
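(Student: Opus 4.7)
The plan is a pigeonhole argument on a carefully chosen finite state space. The naive state $(Q(n), Q(n+1)) \bmod M$ does not suffice, because the coefficient $b(n+2)$ appearing in the recurrence $Q(n+2) = b(n+2)Q(n+1) + Q(n)$ varies with $n$. However, $b$ has a transparent form: $b(n) = 1$ whenever $3 \nmid n$, while $b(3k) = 2k$, so $b(n) \bmod M$ is determined by the residue of $n$ modulo $3M$. This motivates augmenting the state by $n \bmod 3M$. I would set
\[
   \sigma(n) := \bigl( Q(n) \bmod M,\ Q(n+1) \bmod M,\ n \bmod 3M \bigr),
\]
which ranges over a set of cardinality at most $M \cdot M \cdot 3M = 3M^3$.

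First, I would verify that $\sigma(n)$ determines $\sigma(n+1)$: the third coordinate advances deterministically, and from $n \bmod 3M$ one reads off $b(n+2) \bmod M$, after which the recurrence yields $Q(n+2) \bmod M$. I would then observe that this transition map is \emph{invertible}: from $\sigma(n+1)$ the third coordinate recovers $n \bmod 3M$ and hence $b(n+2) \bmod M$, and the recurrence gives $Q(n) \equiv Q(n+2) - b(n+2) Q(n+1) \pmod{M}$. Invertibility is what upgrades the conclusion from \emph{eventually} periodic to \emph{purely} periodic, which in turn furnishes a clean bound on the period itself.

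With determinism plus invertibility in hand, pigeonhole finishes the job. Among $\sigma(0), \sigma(1), \ldots, \sigma(3M^3)$ some two must agree, say $\sigma(i) = \sigma(j)$ with $0 \le i < j \le 3M^3$. Invertibility pushes this equality back to $\sigma(0) = \sigma(j-i)$, and forward determinism then propagates it to $\sigma(n) = \sigma(n+T)$ for every $n \ge 0$, where $T := j-i \le 3M^3$. In particular $Q(n) \equiv Q(n+T) \pmod{M}$ for all $n$, establishing the stated bound.

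The only real subtlety is recognizing that the state must include $n \bmod 3M$ rather than merely $n \bmod 3$, since the value of $b(3k) = 2k$ itself depends on $k$ modulo $M$. Once the state space is chosen correctly, the remaining steps are routine and I do not foresee a genuine obstacle.
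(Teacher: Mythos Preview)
Your proposal is correct and matches the paper's own proof essentially line for line: the paper likewise takes the state triple $(n \bmod 3M,\ Q(n)\bmod M,\ Q(n+1)\bmod M)$, applies pigeonhole to get a collision, and then reverses the recurrence $Q(n-2)=Q(n)-b(n)Q(n-1)$ to push the collision back to $n=0$ and obtain pure periodicity. Your framing in terms of an invertible transition map on the state space is a slightly more explicit way of saying the same thing.
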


\begin{proof} 
We mimic the proof in \cite[Theorem 1]{wall}, which was applied there only to the Fibonacci sequence. It is based on the Pigeonhole Principle.

Neglecting the initial term, the sequence $(b \mod M)$
is periodic with period dividing $3M$ (meaning $b(n) \equiv b(n+3M) \pmod M$ 
as long as $n > 1$). So if there exist integers $h=h(M)$ and $i=i(M)$ 
with $i>h$ such that $i \equiv h \pmod {3M}$ and $Q(i) \equiv Q(h),\ \ 
Q(i+1) \equiv Q(h+1) \pmod M$, then by applying the recurrence \eqref{qn} 
repeatedly, we have $Q(i+n) \equiv Q(h+n) \pmod M$ for $n \ge 0$. There are 
only $3M^3$ possible values of the triple ($n \bmod {3M}$, $Q(n) \bmod M$, 
$Q(n+1) \bmod M$), so they must repeat eventually and therefore 
such an $h$ and $i$ exist. 

To show that we can take $h=0$, reverse the recurrence to $Q(n-2) = Q(n) - b(n) Q(n-1)$. Apply it repeatedly, concluding that $Q(0) \equiv Q(i-h) \pmod M$.
\end{proof}

\begin{definition}
For $i=0,1,2$, let $Q_i$ be the subsequence of $Q$ consisting of every third element beginning with the $i$\textsuperscript{th} one, that is, $Q_i(n) = Q(3n+i)$. 
\end{definition}

The periodicity of $(Q \bmod M)$ obviously implies the periodicity of all three $(Q_i \bmod M)$, and vice versa. 

\begin{conj}[Period Conjecture] \label{per}
\ \\
{\rm (a)} If $M>1$ is odd, then for $i=0,1,2$, the period of $(Q_i \bmod M)$ equals $2M$. \\
{\rm (b)} 
If $M>0$ is even, then for $i=0,1,2$, the period of $(Q_i \bmod M)$ divides $M$.
\end{conj}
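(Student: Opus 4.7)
The plan is to recast \eqref{qn} as a matrix iteration and reduce Conjecture~\ref{per} to a single algebraic identity modulo $M$. Since $b(3k+1) = b(3k+2) = 1$ and $b(3k+3) = 2(k+1)$ for $k \ge 1$, three successive steps of \eqref{qn} collapse to
\[
\begin{pmatrix} Q(3k+3) \\ Q(3k+2) \end{pmatrix} = S_{k+1} \begin{pmatrix} Q(3k) \\ Q(3k-1) \end{pmatrix}, \qquad S_m := \begin{pmatrix} 4m+1 & 2m+1 \\ 2 & 1 \end{pmatrix},
\]
with $\det S_m = -1$. Setting $v_k := (Q(3k),Q(3k-1))^T$, we have $v_{k+1} = S_{k+1} v_k$ (in fact valid from $v_0 = (0,1)^T$, since $S_1 v_0 = (3,1)^T = v_1$). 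Modulo $M$, the sequence $(S_m \bmod M)$ is periodic in $m$ with period $L_M := M$ for odd $M$ and $L_M := M/2$ for even $M$. Each of $Q_0(k)$, $Q_2(k-1)$, $Q_1(k) = Q_0(k)+Q_2(k-1)$ is a linear function of the coordinates of $v_k$, so the periods of the three subsequences $Q_i \bmod M$ all divide the period of $(v_k \bmod M)$.

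Define the period matrix $R_M := S_{L_M} S_{L_M-1} \cdots S_1 \bmod M$. The cyclic-shift identity $S_{k+L_M+1} \cdots S_{k+2} = S_{k+1} (S_{k+L_M} \cdots S_{k+1}) S_{k+1}^{-1}$ makes every product of $L_M$ consecutive $S_m$'s conjugate to $R_M$, so $(v_k \bmod M)$ is asymptotically periodic with period $j L_M$ if and only if $R_M^j v_0 \equiv v_0 \pmod M$. The \emph{central claim} is
\[
R_M^2 \equiv I \pmod M \qquad \text{for every } M \ge 2.
\]
Granting this, the period of $(v_k \bmod M)$ divides $2 L_M$, settling part (b) and giving the upper bound in (a). For the matching lower bound in (a), the identity $(S_m - S_{m'}) v_0 = (2(m-m'),\,0)^T$ forces any period to be a multiple of $L_M$, hence $L_M$ or $2L_M$; the numerically observed form $R_M \equiv \bigl(\begin{smallmatrix} 1 & 0 \\ c_M & -1 \end{smallmatrix}\bigr) \pmod M$ for odd $M$ then gives $R_M v_0 = (0,-1)^T \neq v_0$, ruling out $L_M$. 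Showing that each individual $Q_i$ (not merely the pair) attains period $2M$ requires a little additional bookkeeping on the first and second coordinates of $v_k$, but follows from the same matrix analysis.

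Since $\det R_M = (-1)^{L_M}$, by Cayley--Hamilton the central claim reduces to $\mathrm{tr}(R_M) \equiv 0 \pmod M$ when $L_M$ is odd (i.e.\ $M$ odd or $M \equiv 2 \pmod 4$), and to $R_M \equiv \pm I$ when $L_M$ is even (i.e.\ $4 \mid M$). I would attempt the trace identity by some mix of: (i) Chinese Remainder reduction to $M = p^e$, treating $p=2$ separately from odd $p$ and lifting $p \to p^e$ by induction (exploiting that $S_m$ is affine in $m$); (ii) the factorization $S_m = D_m E^2$ with $D_m = \bigl(\begin{smallmatrix} 2m & 1 \\ 1 & 0 \end{smallmatrix}\bigr)$, $E = \bigl(\begin{smallmatrix} 1 & 1 \\ 1 & 0 \end{smallmatrix}\bigr)$, and the identity $E^2 D_m = S_m^T$, which rewrites $R_M = D_{L_M} (S_1 \cdots S_{L_M-1})^T E^2$ and hopefully forces a symmetry via transposition; or (iii) the decomposition $S_m = S_0 + mV$ with $V$ of rank one satisfying $V^2 = 4V$, expanding $R_M$ combinatorially so that $\mathrm{tr}(R_M)$ becomes a polynomial in the power sums $\sum_{m=1}^{L_M} m^j$, many of which vanish modulo $M$. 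The main obstacle is exactly this trace identity: the $S_m$ do not commute, no closed form is apparent, and even the prime case $M = p$ looks unsettled. My guess is that the cleanest proof combines (i) and (iii), with the behavior at $4 \mid M$ (where $R_M$ equals $\pm I$ rather than merely having trace zero) needing a separate, essentially $2$-adic argument.
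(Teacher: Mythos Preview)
The paper does \emph{not} prove this statement: it is labeled a Conjecture, and the only supporting evidence offered is numerical verification for $M \le 1000$ together with the tables for $M$ a power of $2$. There is no argument in the paper to compare your proposal against.

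Your matrix reformulation is correct and is a natural way to organize an attack. The computation of $S_m$, the check that $v_0=(0,1)^T$ works, the observation that $(S_m \bmod M)$ has period $L_M = M$ or $M/2$, and the reduction of the upper bound in both parts to the single claim $R_M^2 \equiv I \pmod M$ are all sound. The lower-bound argument for part~(a) (that any period must be a multiple of $M$ via $(S_{P+1}-S_1)v_0 \equiv 0$) is also fine.

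But your proposal is not a proof, and you say so yourself: the ``central claim'' $R_M^2 \equiv I \pmod M$ is left open, with only a sketch of three possible attacks and an explicit admission that ``even the prime case $M=p$ looks unsettled.'' Moreover, the step ruling out period $L_M$ in part~(a) rests on the ``numerically observed form'' of $R_M$, which is a second unproven input. So what you have is a clean reduction of the Period Conjecture to a concrete matrix identity (plus a structural claim about $R_M$), not a resolution of it. That is progress of a sort---it isolates exactly what needs to be shown---but it leaves the conjecture where the paper left it: open.
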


This conjecture has been verified numerically for $M \leq 1000$.
For $M$ a power of $2$, some of these calculations are shown in Tables 1-3, and a more exact conjecture is given in the last column of Table \ref{table_qi}. 

\subsection{A Possible 2-adic Approach}

In this section we reformulate some of the preceding results in the language of $p$-adic analysis.
Let $p$ be prime, let ${\mathbb{Z}}_p$ denote 
the $p$-adic integers, and let $|\cdot|_p$ be the usual $p$-adic absolute value on ${\mathbb{Z}}_p$ (so $|x|_p = [x]_p^{-1}$ for $x \in {\mathbb{Z}}$). In particular, we consider $p=2$ in what follows.

\begin{lemma} \label{parity_lemma}
If $n$ is odd, then $A(n) \not\equiv A(n+2^k) \pmod {2^{k+1}}$ for all $k \geq 0$.
\end{lemma}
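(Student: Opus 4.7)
The plan is to prove the sharper statement that $v_2\bigl(A(n+2^k)-A(n)\bigr)=k$ for every odd $n$ and every $k\ge 0$. By Proposition~\ref{A_period} the quantity $A(n+2^k)-A(n)\pmod{2^{k+1}}$ depends only on $n\bmod 2^{k+1}$, and for $n_0\equiv n_1+2^k\pmod{2^{k+1}}$ one has $A(n_0+2^k)-A(n_0)\equiv -\bigl(A(n_1+2^k)-A(n_1)\bigr)\pmod{2^{k+1}}$; hence I may assume $n\in\{1,3,\dots,2^k-1\}$. The base case $k=0$ is immediate: a short induction from \eqref{seqA} shows that $A(n)$ is odd exactly when $n$ is even, so $A(n+1)-A(n)=nA(n)+1$ is odd for odd $n$.

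For $k\ge 1$, iterating \eqref{seqA} gives the closed form
\[
A(n+2^k)-A(n)=\left(\frac{(n+2^k)!}{n!}-1\right)A(n)+\sum_{i=1}^{2^k}\frac{(n+2^k)!}{(n+i)!}.
\]
Legendre's formula yields $v_2\bigl((n+2^k)!/n!\bigr)\ge 2^k-1\ge k+1$ once $k\ge 2$, so for those $k$ the first summand vanishes modulo $2^{k+1}$. Rewriting the remaining sum as $\sum_{j=0}^{2^k-1}(n+2^k)_j$ in falling-factorial notation and applying Vandermonde,
\[
(n+2^k)_j-(n)_j=\sum_{i=0}^{j-1}\binom{j}{i}(n)_i\,(2^k)_{j-i},
\]
the identity $v_2\bigl((2^k)_r\bigr)=k+v_2((r-1)!)$ ensures that only the terms with $j-i\in\{1,2\}$ survive modulo $2^{k+1}$. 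Telescoping (extending the $(n)_l$-sums to $\infty$ is valid since $n<2^k$), the expression collapses to
\[
A(n+2^k)-A(n)\equiv -2^{k-1}\sum_{l\ge 0}l(l+1)(n)_l\pmod{2^{k+1}}.
\]

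The final ingredient is a mod-$4$ calculation of $\sum l(l+1)(n)_l$: for odd $n$ the factor $(n)_l$ is odd only at $l\in\{0,1\}$ (since $n-1$ is even once $l\ge 2$), while $l(l+1)\equiv 2\pmod 4$ precisely when $l\in\{1,2\}$, so only $l=1$ contributes and $\sum l(l+1)(n)_l\equiv 2n\equiv 2\pmod 4$. Hence $A(n+2^k)-A(n)\equiv 2^k\pmod{2^{k+1}}$ for $k\ge 2$, which is nonzero. The single remaining case $k=1$ is dispatched by computing $A(n+2)-A(n)=(n^2+3n+1)A(n)+(n+3)$ modulo $4$ after first establishing by induction that $A(n)\equiv n+1\pmod 4$ for odd $n$. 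The main obstacle I foresee is the $2$-adic bookkeeping in the Vandermonde expansion---keeping careful track of which $(2^k)_r$ have valuation exactly $k$ versus $\ge k+1$; once the surviving $r\in\{1,2\}$ terms are isolated, the remaining reduction is essentially immediate.
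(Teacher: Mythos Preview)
The paper actually omits this proof, saying only that it ``relies on Proposition~\ref{A_period} and elementary arguments,'' so there is no detailed argument to compare yours against. Your approach via falling factorials and the Vandermonde identity is sound, and the sharper conclusion $v_2\bigl(A(n+2^k)-A(n)\bigr)=k$ is precisely the isometry statement used in the proof of Proposition~\ref{A_zero}(iv). There is, however, one slip in the write-up that must be repaired.

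You assert that for $k\ge 2$ ``the first summand vanishes modulo $2^{k+1}$,'' meaning $\bigl((n+2^k)!/n!-1\bigr)A(n)\equiv 0$. But $v_2\bigl((n+2^k)!/n!\bigr)\ge k+1$ forces $(n+2^k)!/n!-1\equiv -1\pmod{2^{k+1}}$, so that summand is congruent to $-A(n)$, not to~$0$. The fix is immediate: having already reduced to odd $n$ with $0<n<2^k$, you may write $A(n)=\sum_{j\ge 0}(n)_j$ and absorb the surviving $-A(n)$ into the second sum, obtaining
\[
A(n+2^k)-A(n)\;\equiv\;\sum_{j=0}^{2^k-1}\bigl[(n+2^k)_j-(n)_j\bigr]\pmod{2^{k+1}}.
\]
This is clearly what you intended, since your very next line analyses $(n+2^k)_j-(n)_j$; but as written the subtraction of $(n)_j$ appears from nowhere. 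With this one line corrected, the rest of your argument---isolating $r\in\{1,2\}$ via $v_2\bigl((2^k)_r\bigr)=k+v_2\bigl((r-1)!\bigr)$, collapsing to $-2^{k-1}\sum_{l\ge 0} l(l+1)(n)_l$, and the final mod-$4$ count showing this sum is $\equiv 2\pmod 4$---goes through correctly and yields $A(n+2^k)-A(n)\equiv 2^k\pmod{2^{k+1}}$.
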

The proof relies on Proposition \ref{A_period} and elementary arguments. We omit the details for the sake of brevity.

\begin{prop} \label{A_zero}\ \\
{\rm (i)} The sequence $A$ extends uniquely to a continuous 
function $\tilde{A}:\mathbb{Z}_2 \rightarrow \mathbb{Z}_2$ 
(so $\tilde{A}(n) = A(n)$ for $n = 0, 1, 2, \dots$). \\
{\rm (ii)} For each $k \geq 1$, 
the interval $[0,2^k)$ contains a unique zero $c_k$ of 
$A \bmod 2^k$
(that is, $A(c_k) \equiv 0 \pmod {2^k}$). 
See Table 6 for the first few $c_k$. \\
{\rm (iii)} The function $\tilde{A}$ has the unique zero 
\[
  c = \lim_{k \to \infty } c_k  
    = 11001110010100010100110001\ldots \in \mathbb{Z}_2 
\]
where the limit is taken in $\mathbb{Z}_2$. For $c$ see 
{\rm \cite[Sequences A127014 and A127015]{oeis}}.\\
{\rm (iv)} For each $n \in \mathbb{Z}_2$, we have $|\tilde{A}(n)|_2 = |n-c|_2$.
\end{prop}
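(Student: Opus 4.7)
The plan is to get (i) from the periodicity in Proposition~\ref{A_period}, to locate $c_k$ by induction using Lemma~\ref{parity_lemma}, to obtain $c$ as the $2$-adic limit of $(c_k)$, and to derive (iv) by combining uniqueness with continuity and density. For (i): Proposition~\ref{A_period} gives $A(m)\equiv A(n)\pmod{2^k}$ whenever $m\equiv n\pmod{2^k}$, i.e.\ $|A(m)-A(n)|_2\le|m-n|_2$ on $\mathbb{N}$. So $A\colon\mathbb{N}\to\mathbb{Z}$ is $1$-Lipschitz in the $2$-adic metric, and since $\mathbb{N}$ is dense in the complete space $\mathbb{Z}_2$, it extends uniquely to a continuous (still $1$-Lipschitz) function $\tilde{A}\colon\mathbb{Z}_2\to\mathbb{Z}_2$.

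For (ii), I induct on $k$. Base: $A(0)=1$ and $A(1)=2$, so $c_1=1$ is the unique zero of $A\bmod 2$ in $[0,2)$, and it is odd. Inductive step: assume $c_k\in[0,2^k)$ is the unique zero mod $2^k$ and is odd. By mod-$2^k$ periodicity, the only candidates for a zero of $A\bmod 2^{k+1}$ in $[0,2^{k+1})$ are $n=c_k$ and $n=c_k+2^k$. Because $c_k$ is odd, Lemma~\ref{parity_lemma} gives $A(c_k)\not\equiv A(c_k+2^k)\pmod{2^{k+1}}$; as both values are $\equiv 0\pmod{2^k}$, they differ by an odd multiple of $2^k$, so exactly one is $\equiv 0\pmod{2^{k+1}}$. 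That value is $c_{k+1}$, and it remains odd since $c_{k+1}\equiv c_k\equiv 1\pmod 2$.

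For (iii), $c_{k+1}\in\{c_k,c_k+2^k\}$ gives $|c_{k+1}-c_k|_2\le 2^{-k}$, so $(c_k)$ is Cauchy in $\mathbb{Z}_2$ and converges to some $c$. Continuity of $\tilde{A}$ yields $\tilde{A}(c)=\lim A(c_k)$, and $|A(c_k)|_2\le 2^{-k}\to 0$ forces $\tilde{A}(c)=0$. Any other zero $c'$ is a $2$-adic limit of integers $n_j$ with $A(n_j)\to 0$; for large $j$, $A(n_j)\equiv 0\pmod{2^k}$, whence by (ii) $n_j\equiv c_k\pmod{2^k}$, and in the limit $c'\equiv c_k\pmod{2^k}$ for every $k$, so $c'=c$. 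For (iv), the equivalences $A(n)\equiv 0\pmod{2^k}\iff n\equiv c_k\pmod{2^k}\iff n\equiv c\pmod{2^k}$ supplied by (ii) translate to $|A(n)|_2=|n-c|_2$ for every $n\in\mathbb{N}$; both sides are continuous $\mathbb{Z}_2\to\mathbb{R}$ and agree on the dense set $\mathbb{N}$, so they agree on all of $\mathbb{Z}_2$.

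The only genuinely nontrivial input is Lemma~\ref{parity_lemma}, which enters at the lifting step in (ii); without its non-congruence statement for odd $n$, both candidates for $c_{k+1}$ could be simultaneously zero or simultaneously nonzero mod $2^{k+1}$. I therefore expect the main obstacle to be arranging the induction so that Lemma~\ref{parity_lemma} is always applicable, which reduces to maintaining that $c_k$ is odd throughout; this is automatic because every zero of $A\bmod 2$ is odd, but it is the conceptual heart of the argument.
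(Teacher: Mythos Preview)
Your proof is correct and follows essentially the same route as the paper for parts (i)--(iii): the $1$-Lipschitz extension from Proposition~\ref{A_period}, the Hensel-style induction on $k$ with Lemma~\ref{parity_lemma} supplying the lifting step (and the observation that $c_k$ stays odd), and the Cauchy limit for $c$. For (iv) there is a mild difference worth noting: the paper deduces (iv) from the stronger isometry $|\tilde A(n)-\tilde A(m)|_2=|n-m|_2$ whenever $m,n$ are not both even (with Lemma~\ref{parity_lemma} giving the $\geq$ direction), whereas you bypass this by reading the biconditional $|A(n)|_2\le 2^{-k}\iff |n-c|_2\le 2^{-k}$ directly off the uniqueness in (ii) and then extending by continuity. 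Your route is slightly more economical for the statement at hand; the paper's route yields the extra isometry information as a byproduct.
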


\begin{proof}
(i) This is a simple consequence of Proposition \ref{A_period}. Since $m \equiv n \pmod {2^k}$ implies $A(m) \equiv A(n) \pmod {2^k}$, it follows that $|A(m) - A(n)|_2 \leq |m-n|_2$. 

(ii) We use induction on $k$.
For $k=1$, the congruence $A(n) \equiv 0 \pmod 2$ has the unique solution $n \equiv c_1 \equiv 1 \pmod 2$. (Note for later that $c_k$ is odd, since $c_k \equiv c_1 \pmod 2$.)
Now assume that $A(n) \equiv 0 \pmod {2^k}$ has the unique solution $n \equiv c_k \pmod {2^k}$.
Let us solve $A(n) \equiv 0 \pmod {2^{k+1}}$ for $n$. Reducing modulo $2^k$, we get $A(n) \equiv 0 \pmod {2^k}$, which by the inductive hypothesis implies $n \equiv c_k \pmod {2^k}$.
This corresponds to the two possible solutions $n \equiv c_k \pmod {2^{k+1}}$ and $n \equiv c_k+2^k \pmod {2^{k+1}}$; let $f = A(c_k)$ and let $g = A(c_k+2^k)$.
Then (using Prop. \ref{A_period} with $M = 2^k$) we have $f \equiv g \equiv 0 \pmod {2^k}$, which implies that each of $f$ and $g$ is congruent to $0$ or $2^k$ modulo $2^{k+1}$.
But Lemma \ref{parity_lemma} implies that $f \not\equiv g \pmod {2^{k+1}}$, so one of them must be zero, and one must be nonzero.
Hence a zero of 
$( A\bmod 2^{k+1} )$
exists and is unique, up to translation by a multiple of the period $2^{k+1}$
 (again by Proposition \ref{A_period}, with $M=2^{k+1}$).

(iii) The limit exists since $c_{k+1} \equiv c_k \pmod {2^k}$, and is unique since there is a unique zero of $(A \bmod 2^{k} )$  for each $k$.

(iv) This is a special case of the stronger equality
$|\tilde{A}(n)-\tilde{A}(m)|_2 = |n-m|_2$, which holds if $m$ and $n$ are not both even.
The proof of the $\leq$ direction is in the argument for part (i); the proof of the $\geq$ direction requires Lemma \ref{parity_lemma}. We omit the details.
\end{proof}

\begin{cor} For all $k \geq 1$,
\[ 
c_{k+1} =
\begin{cases}
  c_k       & \text{ if } 2^{k+1}|A(c_k), \\
  c_k + 2^k & \text{ otherwise. }
\end{cases}
\]
\end{cor}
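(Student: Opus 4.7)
The plan is to read the corollary off directly from the inductive construction of $c_{k+1}$ already carried out in the proof of Proposition \ref{A_zero}(ii); almost nothing extra is needed. First I would reduce the defining congruence $A(c_{k+1}) \equiv 0 \pmod{2^{k+1}}$ modulo $2^k$ to obtain $A(c_{k+1}) \equiv 0 \pmod{2^k}$. By Proposition \ref{A_period}, the sequence $(A \bmod 2^k)$ has period exactly $2^k$, and by Proposition \ref{A_zero}(ii) the unique zero in $[0,2^k)$ is $c_k$. Hence $c_{k+1} \equiv c_k \pmod{2^k}$, and since $c_{k+1} \in [0,2^{k+1})$, the only two candidates are $c_{k+1} = c_k$ or $c_{k+1} = c_k + 2^k$.

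The first case of the corollary is then immediate: if $2^{k+1} \mid A(c_k)$, then $c_k$ itself satisfies the required congruence modulo $2^{k+1}$, and uniqueness (Proposition \ref{A_zero}(ii) at level $k+1$) forces $c_{k+1} = c_k$.

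For the ``otherwise'' case, I would note that $A(c_k) \equiv 0 \pmod{2^k}$ together with $A(c_k) \not\equiv 0 \pmod{2^{k+1}}$ forces $A(c_k) \equiv 2^k \pmod{2^{k+1}}$. Similarly, since $c_k + 2^k \equiv c_k \pmod{2^k}$, Proposition \ref{A_period} gives $A(c_k+2^k) \equiv 0 \pmod{2^k}$, so $A(c_k+2^k) \in \{0,\,2^k\} \pmod{2^{k+1}}$. Now I would apply Lemma \ref{parity_lemma} with $n = c_k$, which is odd (as $c_k \equiv c_1 = 1 \pmod 2$, noted parenthetically in the proof of Proposition \ref{A_zero}(ii)): the lemma yields $A(c_k) \not\equiv A(c_k + 2^k) \pmod{2^{k+1}}$. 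Combined with the previous sentence, this forces $A(c_k + 2^k) \equiv 0 \pmod{2^{k+1}}$, and then uniqueness gives $c_{k+1} = c_k + 2^k$.

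There is no real obstacle; the main content of the argument is already inside the inductive step of Proposition \ref{A_zero}(ii). The only point that must be recorded explicitly is that $c_k$ is odd, so that Lemma \ref{parity_lemma} is applicable — otherwise the corollary is essentially a repackaging of that inductive step.
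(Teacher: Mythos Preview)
Your proposal is correct and follows exactly the route the paper takes: the paper's own proof is the single sentence ``This is immediate from Proposition~\ref{A_zero} part~(ii) and its proof,'' and your write-up simply unpacks that inductive step (the two candidates $c_k$ and $c_k+2^k$, the oddness of $c_k$, and the use of Lemma~\ref{parity_lemma} to distinguish them). There is nothing to add or correct.
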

\begin{proof}
This is immediate from Proposition \ref{A_zero} part (ii) and its proof.
\end{proof}

If Conjecture \ref{per} is true, then similarly $Q_i$ extends uniquely to a continuous function $\tilde{Q}_i:\mathbb{Z}_2 \rightarrow \mathbb{Z}_2$ for $i=0,1,2$. 
%
%
In that case, we can replace Conjecture \ref{zer} with the slightly stronger 

\begin{conj} \label{adic} 
For all $n \in \mathbb{Z}_2$ and $k \geq 1$,
\begin{align*}
|\tilde{Q}_0(n)|_2 &\ge |4n(n+2)|_2,         \tag{i}\\
|\tilde{Q}_1(n)|_2 &\ge |2(n+1)|_2,          \tag{ii}\\
|\tilde{Q}_2(n)|_2 &= 1,                     \tag{iii}\\
|c-c_k|_2 &\geq 2^{-2k}.                     \tag{iv}
\end{align*}
\end{conj}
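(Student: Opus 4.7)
The plan is to prove Conjecture \ref{adic} conditionally, assuming both the Period Conjecture \ref{per} (needed to define the extensions $\tilde{Q}_i$) and the Zeros Conjecture \ref{zer} (whose integer-case bounds we wish to propagate $2$-adically). First, assuming \ref{per} holds for $M$ a power of $2$, I would show that $Q_i$ extends uniquely to a continuous function $\tilde{Q}_i : \mathbb{Z}_2 \to \mathbb{Z}_2$ for $i = 0, 1, 2$, by exactly the argument in Proposition \ref{A_zero}.(i): periodicity of $(Q_i \bmod 2^k)$ yields $|Q_i(m) - Q_i(n)|_2 \le |m-n|_2$ for non-negative integers $m, n$, so each $Q_i$ is uniformly continuous on the dense subset $\mathbb{Z}_{\ge 0}$ and extends by continuity.

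Parts (i), (ii), and (iii) then follow by a single density argument. Both sides of each inequality are continuous functions $\mathbb{Z}_2 \to \{0\} \cup \{2^{-j} : j \ge 0\}$; the Zeros Conjecture \ref{zer} asserts exactly these inequalities for non-negative integer $n$; and $\mathbb{Z}_{\ge 0}$ is dense in $\mathbb{Z}_2$, so the bounds propagate. For (iii) one may alternatively note that the unit sphere $\{x \in \mathbb{Z}_2 : |x|_2 = 1\}$ is closed and contains every $Q_2(m)$ by \ref{zer}.(iii), so it contains the entire image of $\tilde{Q}_2$.

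Part (iv) I would handle separately, using Proposition \ref{A_zero}.(iv), namely $|\tilde{A}(n)|_2 = |n - c|_2$, at the positive integer $n = c_k$:
\[
|c - c_k|_2 \;=\; |A(c_k)|_2 \;=\; \frac{1}{[A(c_k)]_2} \;\ge\; \frac{1}{(c_k+1)^2} \;\ge\; \frac{1}{2^{2k}},
\]
where the penultimate step invokes Conjecture \ref{zer}.(iv) and the last uses $c_k < 2^k$, which is built into the definition of $c_k$ as the zero of $A \bmod 2^k$ in $[0, 2^k)$.

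The real obstacle is of course proving \ref{per} and \ref{zer} themselves, neither of which is currently in reach. Both require a detailed $2$-adic analysis of the coupled recurrence
\[
Q_0(n) = 2n\,Q_2(n-1) + Q_1(n-1), \qquad Q_1(n) = (2n+1)\,Q_2(n-1) + Q_1(n-1),
\]
together with $Q_2(n) = (4n+1)\,Q_2(n-1) + 2\,Q_1(n-1)$, in which the $n$-dependent coefficient $2n$ makes the dynamics genuinely nonautonomous and is the source of the difficulty. A natural line of attack is a simultaneous induction on $k$ tracking both the period of $(Q_i \bmod 2^k)$ and the precise $2$-valuations of $Q_0, Q_1$ at the candidate zero-locations $n \equiv 0, -2 \pmod{2^{k-3}}$ and $n \equiv -1 \pmod{2^{k-1}}$ respectively, with the two pieces of information feeding each other at each inductive step; this is where any serious progress would have to be concentrated.
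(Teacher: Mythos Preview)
Your conditional reduction is correct and, for parts (i)--(iii), matches the paper exactly: the paper also remarks that \ref{zer}(i)--(iii) together with Conjecture~\ref{per} imply \ref{adic}(i)--(iii) for all $n\in\mathbb{Z}_2$ ``by continuity,'' which is precisely your density argument.

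For part (iv), however, your route diverges from the paper's. The paper treats \ref{adic} as \emph{slightly stronger} than \ref{zer} and only sketches the implication \ref{adic}(iv) $\Rightarrow$ \ref{zer}(iv) (via Proposition~\ref{A_zero}(iv)). You instead prove the opposite implication \ref{zer}(iv) $\Rightarrow$ \ref{adic}(iv), and your chain
\[
|c-c_k|_2 = |A(c_k)|_2 = [A(c_k)]_2^{-1} \ge (c_k+1)^{-2} \ge 2^{-2k}
\]
is valid: the first equality is Proposition~\ref{A_zero}(iv) at the integer $c_k$, the inequality is \ref{zer}(iv), and $c_k+1\le 2^k$ since $c_k\in[0,2^k)$. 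Combined with the paper's asserted reverse implication, this shows that statements (iv) of the two conjectures are in fact \emph{equivalent}, an observation the paper does not make explicit and which sits somewhat uneasily with the ``slightly stronger'' label. So your approach buys a cleaner logical picture: given \ref{per}, Conjectures \ref{zer} and \ref{adic} are equivalent, not merely comparable.

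Since the statement is a conjecture that the paper does not prove unconditionally, your closing acknowledgment that the real content lies in establishing \ref{per} and \ref{zer} is the right assessment.
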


For $0 \le n \in \mathbb{Z}$, statements (i)-(iii) are equivalent to statements (i)-(iii) of Conjecture \ref{zer}. On the other hand, \ref{zer}(i)-(iii) and Conjecture \ref{per} imply \ref{adic}(i)-(iii) for all $n \in \mathbb{Z}_2$, by continuity.

It is not immediately obvious that statement \ref{adic}(iv) implies statement \ref{zer}(iv), but it does. The proof makes use of part (iv) of Proposition \ref{A_zero}, among other things.
Statement \ref{adic}(iv) is also equivalent to the statement that there are never more consecutive zeros in the $2$-adic expansion of $c$ than the number of digits preceding those zeros.
As far as progress toward this conjecture goes, we lack a 
description of $c$ at this time other than as a sequence of digits computed 
by brute force (as illustrated in Table 6). In particular, there is no obvious
pattern to the distribution of ones and zeros in its $2$-adic expansion. 

\medskip

\noindent 
\textbf{Remarks.}
 
1. Concerning the extension of $A$ to $\mathbb{Z}_2$, we have gained something unexpected. The extension of $Q_i$ to negative integers can be effected directly from the defining recurrence, and this extension agrees with the one obtained via $\tilde{Q}_i$:
\[
      Q_i(-n) = \tilde{Q}_i(-n) = \lim_{k \to \infty} Q_i(2^k-n).
\]
But in the case of $A$, we cannot use the recurrence because 
\[
      A(0) = 0 \cdot A(-1) + 1
\]
cannot be solved for $A(-1)$. It seems as if $A(-1)$ is a free parameter that allows us to extend $A$ to $-\mathbb{N}$ in any number of equally natural ways. But in fact because of the existence of $\tilde{A}$, we see that
\[
     A(-1) = \tilde{A}(-1) = \lim_{k \to \infty} A(2^k-1) = 0011110100110010 \dots \in \mathbb{Z}_2 \setminus \mathbb{Z}
\]
is a privileged choice.

Like $c$, the number $\tilde{A}(-1)$ is an interesting-looking constant that would enjoy being studied further.



2. The (hopeful) point of the $p$-adic approach is to understand $A$ and $Q$ by studying $\tilde{A}$ and $\tilde{Q}_i$ using methods of $p$-adic analysis. Are $\tilde{A}$ and $\tilde{Q}_i$ differentiable? Are they analytic? Is it possible to represent them by power series or integrals? Can iterative root-finding methods be used to compute $c$ quickly?

\section*{Acknowledgements}

The computer algebra systems SAGE \cite{sage} and PARI/GP \cite{pari} were used to do calculations. 
We are grateful to Kevin Buzzard, Kieren MacMillan,
and Wadim Zudilin for suggestions on Section 5, and to David Loeffler \cite{loeffler} and Sergey Zlobin \cite{zlobin} for early computations of $P^{\ast}$ up to 2 million and 4 million, respectively. We thank MacMillan also for his help typesetting the Appendix.
The first author is grateful to the Department of Mathematics at Keio
University in Japan for its hospitality during his visit in November, 2006,
when part of Section 5 was written.

\newpage
\vspace{2em}
\begin{center}
\Large \textbf{Tables}
\end{center}

\begin{table}[ht]
\begin{center}
\caption{$Q_0(n)$ mod $2^k$}
\begin{tabular}{|l|r|r|r|r|r|r|r|r|r|r|r|r|r|r|r|r|r|}
\hline 
\backslashbox{$k$}{$n$} & 0 &1 &2 & 3& 4& 5& 6& 7& 8& 9& 10& 11& 12& 13& 14& 15& period\\
\hline
1 & 0& 1& 0&  1&  0&  1&  0&  1&  0&  1&  0&  1&  0&  1&  0& 1&   2 \\
2 & 0& 3& 0&  1&  0&  3&  0&  1&  0&  3&  0&  1&  0&  3&  0& 1&   4 \\
3 & 0& 3& 0&  1&  0&  3&  0&  1&  0&  3&  0&  1&  0&  3&  0& 1&   4 \\
4 & 0& 3& 0&  1&  0&  3&  0&  1&  0&  3&  0&  1&  0&  3&  0& 1&   4 \\
5 & 0& 3& 0&  17& 0&  3&  0&  1&  0&  3&  0&  17& 0&  3&  0& 1&   8 \\
6 & 0& 3& 32& 17& 32& 35& 0&  1&  0&  35& 32& 17& 32& 3&  0& 1&   16\\
7 & 0& 3& 32& 81& 96& 99& 64& 65& 64& 35& 96& 81& 32& 67& 0& 1&   32\\
\hline
[$Q_0(n)]_2$ 
  & -& 1 & 32& 1 & 32& 1 & 64& 1& 64& 1 & 32& 1 & 32& 1 & 128& 1 & -\\
\hline
\end{tabular}
\end{center}
\end{table}

\begin{table}[ht]
\begin{center}
\caption{$Q_1(n)$ mod $2^k$}
\begin{tabular}{|l|r|r|r|r|r|r|r|r|r|r|r|r|r|r|r|r|r|}
\hline 
\backslashbox{$k$}{$n$} & 0 &1 &2 & 3& 4& 5& 6& 7& 8& 9& 10& 11& 12& 13& 14& 15& period\\
\hline
1 & 1& 0& 1&  0&  1&  0&  1&  0&  1&  0&  1&  0&  1&  0&  1&  0&   2\\
2 & 1& 0& 3&  0&  1&  0&  3&  0&  1&  0&  3&  0&  1&  0&  3&  0&   4\\
3 & 1& 4& 7&  0&  1&  4&  7&  0&  1&  4&  7&  0&  1&  4&  7&  0&   4\\
4 & 1& 4& 7&  8&  9&  12& 15& 0&  1&  4&  7&  8&  9&  12& 15& 0&   8\\
5 & 1& 4& 7&  24& 9&  12& 15& 16& 17& 20& 23& 8&  25& 28& 31& 0&   16\\
6 & 1& 4& 39& 24& 9&  12& 47& 48& 49& 20& 23& 8&  57& 28& 31& 32&  32 \\
7 & 1& 4& 39& 24& 73& 12& 47& 48& 49& 20& 23& 72& 57& 28& 95& 96&  64 \\
\hline
[$Q_1(n)]_2$ 
  & 1& 4 & 1& 8 & 1&  4 & 1&  16& 1&  4 & 1&  8 & 1&  4 & 1&  32 & -\\
\hline
\end{tabular}
\end{center}
\end{table}

\begin{table}[ht]
\begin{center}
\caption{$Q_2(n)$ mod $2^k$}
\begin{tabular}{|l|r|r|r|r|r|r|r|r|r|r|r|r|r|r|r|r|r|}
\hline 
\backslashbox{$k$}{$n$} & 0 &1 &2 & 3& 4& 5& 6& 7& 8& 9& 10& 11& 12& 13& 14& 15& period\\
\hline
1 & 1& 1& 1&  1&   1&  1&   1&   1&   1&   1&  1&   1&  1&  1&  1&  1&   1 \\
2 & 1& 3& 3&  1&   1&  3&   3&   1&   1&   3&  3&   1&  1&  3&  3&  1&   4 \\
3 & 1& 7& 7&  1&   1&  7&   7&   1&   1&   7&  7&   1&  1&  7&  7&  1&   4 \\
4 & 1& 7& 7&  9&   9&  15&  15&  1&   1&   7&  7&   9&  9&  15& 15& 1&   8 \\
5 & 1& 7& 7&  9&   9&  15&  15&  17&  17&  23& 23&  25& 25& 31& 31& 1&   16 \\
6 & 1& 7& 7&  41&  41& 47&  47&  49&  49&  55& 55&  25& 25& 31& 31& 33&  32\\
7 & 1& 7& 71& 105& 41& 111& 111& 113& 113& 55& 119& 25& 89& 95& 95& 97&  64 \\
\hline
[$Q_2(n)]_2$ 
  & 1& 1 & 1& 1 & 1&  1 & 1&  1& 1&  1 & 1&  1 & 1&  1 & 1&  1 & -\\
\hline
\end{tabular}
\end{center}
\end{table}

\begin{table}[ht]  
\begin{center}
\caption{Period of $( Q_{i} \bmod 2^{k})$}
\label{table_qi}
\begin{tabular}{|l|r|r|r|r|r|r|r|r|r|r|l|} 
\hline 
\backslashbox{seq.}{$k$} & 1& 2& 3& 4& 5& 6& 7& 8& 9& 10  & conjecture\\
\hline
$Q_0$ & 2& 4& 4& 4&  8& 16& 32&  64& 128& 256     & $2^{k-2}$ for $k>3$   \\
$Q_1$ & 2& 4& 4& 8& 16& 32& 64& 128& 256& 512     & $2^{k-1}$ for $k>2$   \\
$Q_2$ & 1& 4& 4& 8& 16& 32& 64& 128& 256& 512     & $2^{k-1}$ for $k>2$   \\
\hline
\end{tabular}
\end{center}
\end{table}

\begin{table}[ht]
\begin{center}
\caption{$A(n)$ mod $2^k$}
\begin{tabular}{|l|r|r|r|r|r|r|r|r|r|r|r|r|r|r|r|r|r|}
\hline 
\backslashbox{$k$}{$n$} & 0 &1 &2 & 3& 4& 5& 6& 7& 8& 9& 10& 11& 12& 13& 14& 15& period\\
\hline
1 & 1& 0& 1& 0&  1&  0&  1&  0& 1&  0&  1&  0&  1&  0&  1& 0&   2 \\
2 & 1& 2& 1& 0&  1&  2&  1&  0& 1&  2&  1&  0&  1&  2&  1& 0&   4 \\
3 & 1& 2& 5& 0&  1&  6&  5&  4& 1&  2&  5&  0&  1&  6&  5& 4&   8 \\
4 & 1& 2& 5& 0&  1&  6&  5&  4& 1&  10& 5&  8&  1&  14& 5& 12&  16 \\
5 & 1& 2& 5& 16& 1&  6&  5&  4& 1&  10& 5&  24& 1&  14& 5& 12&  32 \\
6 & 1& 2& 5& 16& 1&  6&  37& 4& 33& 42& 37& 24& 33& 46& 5& 12&  64\\
7 & 1& 2& 5& 16& 65& 70& 37& 4& 33& 42& 37& 24& 33& 46& 5& 76&  128\\
\hline
[$A(n)]_2$ 
  & 1& 2 & 1& 16 & 1& 2& 1&  4& 1&  2 & 1&  8 & 1&  2 & 1&  4 & -\\
\hline
\end{tabular}
\end{center}
\end{table}

\begin{table}[tb]
\begin{center}
\caption{$c_k = $ smallest $n$ such that $A(n)$ is divisible by $2^k$} 
\begin{tabular}{|r|r|l|l|} 
\hline 
$k$ & $c_k$ in decimal & $c_k$ in $2$-adic notation  & $c_k - c_{k-1}$ \\
    & notation         & (reverse binary)            & \\
\hline
1 & 1 & 1 & -\\
2 & 3 & 11 & $2^1$ \\
3 & 3 & 11 & $0$ \\
4 & 3 & 11 & $0$ \\
5 & 19 & 11001 & $2^4$ \\
6 & 51 & 110011 & $2^5$ \\
7 & 115 & 1100111 & $2^6$ \\
8 & 115 & 1100111 & $0$ \\
9 & 115 & 1100111 & $0$ \\
10 & 627 & 1100111001 & $2^9$ \\
11 & 627 & 1100111001 & $0$ \\
12 & 2675 & 110011100101 & $2^{11}$ \\
13 & 2675 & 110011100101 & $0$ \\
14 & 2675 & 110011100101 & $0$ \\
15 & 2675 & 110011100101 & $0$ \\
16 & 35443 & 1100111001010001 & $2^{15}$ \\ 
17 & 35443 & 1100111001010001 & $0$ \\
18 & 166515 & 110011100101000101 & $2^{17}$ \\
19 & 166515 & 110011100101000101 & $0$ \\
20 & 166515 & 110011100101000101 & $0$ \\
21 & 1215091 & 110011100101000101001 & $2^{20}$ \\
22 & 3312243 & 1100111001010001010011 & $2^{21}$ \\
\hline
\end{tabular}
\end{center}
\end{table}

\clearpage

\bibliographystyle{amsplain}

\begin{thebibliography}{9999}

\bibitem{cohn} H. Cohn, A short proof of the simple continued fraction expansion of $e$, \textit{Amer. Math. Monthly} \textbf{113} (2006), 57-62.

\bibitem{finch} S. R. Finch, {\it Mathematical Constants}, Cambridge University Press, Cambridge, 2003.

\bibitem{guy} R. K. Guy, {\it Unsolved Problems in Number Theory}, Third Ed., Springer-Verlag, New York, 2004.

\bibitem{koblitz} N. Koblitz, {\it $p$-adic Numbers, $p$-adic Analysis, and Zeta-Functions}, Second Ed., Springer, New York, 1996.

\bibitem{loeffler} D. Loeffler, personal communication, 2 December 2006.

\bibitem{mossinghoff} M. Mossinghoff, personal communication, 12 June 2007.

\bibitem{pari} PARI/GP, version {\tt 2.1.5}, Bordeaux, 2002, \url{http://pari.math.u-bordeaux.fr/}.

\bibitem{oeis} N. J. A. Sloane, \textit{The On-Line Encyclopedia of Integer Sequences} (2007), published electronically at \url{http://www.research.att.com/~njas/sequences/}.

\bibitem{sondow1} J. Sondow, A geometric proof that $e$ is irrational and a new measure of its irrationality, \textit{Amer. Math. Monthly} \textbf{113} (2006), 637-641. 


\bibitem{wpsa1} J. Sondow and K. Schalm, Which partial Sums of the Taylor series for $e$ are convergents to $e$? (and a link to the primes 2, 5, 13, 37, 463), \textit{Tapas in Experimental Mathematics}, T. Amdeberhan and V. Moll, eds., Contemporary Mathematics, vol. 457, American Mathematical Society, Providence, RI, 2008, 273-284.

\bibitem{sage} W. Stein, \textit{Sage {M}athematics {S}oftware ({V}ersion 1.3.2.2)}, The SAGE~Group, 2007, \url{http://www.sagemath.org}.

\bibitem{wall} D. D. Wall, Fibonacci series modulo $m$, \textit{Amer. Math. Monthly} \textbf{67} (1960), 525-532.

\bibitem{zivkovic} M. Zivkovic, The number of primes $\Sigma_{i=1}^n(-1)^{n-i}i!$ is finite, \textit{Math. Comp.} \textbf{68} (1999), 403-409.

\bibitem{zlobin} S. A. Zlobin, personal communication, 3 February 2007.


\end{thebibliography}

\end{document}